\newcommand{\abs}[1]{\left|#1\right|}
\newcommand{\norm}[1]{\|#1\|}
\newcommand{\fdder}{\boldsymbol{\partial}}
\newcommand{\myint}[4]{\int \limits_{#1}^{#2}\! {#3} \, \mathrm{d} #4 }
\newcommand{\ip}[3]{\left( #1, #2 \right)_{#3}}
\newcommand{\dr}{\mathrm{d}}
\newcommand{\dd}[2]{\frac{\dr#1}{\dr#2}}
\newcommand{\kron}{\otimes}
\newcommand{\complex}[1]{\overline{#1}}
\newcommand{\fcoeff}[1]{\widehat{#1}}
\newcommand{\bfu}{\mathbf{u}}
\newcommand{\bfv}{\mathbf{v}}
\newcommand{\bfd}{\boldsymbol{\delta}}
\newcommand{\bfds}{\mathbf{d}}
\newcommand{\ordo}{\mathcal{O}}
\newcommand{\phat}{\widehat{P}}
\newcommand{\bferr}{\boldsymbol{\varepsilon}}
\theoremstyle{plain}
\newtheorem{theorem}{Theorem}
\newtheorem{lemma}{Lemma}
\theoremstyle{definition}
\theoremstyle{remark}
\newtheorem{remark}{Remark}
\newlength\figureheight
\newlength\figurewidth
\numberwithin{equation}{section}
\begin{document}

\title[Moving point sources in hyperbolic equations]{Approximating moving point sources in hyperbolic partial differential equations}

\author[Y. Ljungberg Rydin]{Ylva Ljungberg Rydin}
\author[M. Almquist]{Martin Almquist}

\address[Y. Ljungberg Rydin]{Department of Information Technology \\
  Uppsala University \\
  Uppsala, Sweden}

\address[M. Almquist]{Department of Information Technology \\
  Uppsala University \\
  Uppsala, Sweden}

\thanks{Corresponding author: Y.\ Ljungberg Rydin. \emph{E-mail address:}  \texttt{ylva.rydin@it.uu.se}}

\keywords{Finite difference methods; Moving point source; discrete delta distribution}

\begin{abstract}
We consider point sources in hyperbolic equations discretized by finite differences. If the source is stationary, appropriate source discretization has been shown to preserve the accuracy of the finite difference method. Moving point sources, however, pose two challenges that do not appear in the stationary case. First, the discrete source must not excite modes that propagate with the source velocity. Second, the discrete source spectrum amplitude must be independent of the source position. We derive a source discretization that meets these requirements and prove design-order convergence of the numerical solution for the one-dimensional advection equation. Numerical experiments indicate design-order convergence also for the acoustic wave equation in two dimensions. The source discretization covers on the order of $\sqrt{N}$ grid points on an $N$-point grid and is applicable for source trajectories that do not touch domain boundaries.
\end{abstract}

\selectlanguage{english}

\maketitle

\section{Introduction}
Point sources are frequently used to model, for example, sources of acoustic \cite{AcousticPS} and elastic \cite{Aki2002} waves. Point sources are also used to represent boundaries and interfaces in level set methods \cite{Zahedi2010} and immersed boundary methods \cite{Peskin2002}. In many applications, the point sources are actually moving. Unless the source velocity is orders of magnitude smaller than the wave speed, accurate solution requires that the source movement is taken into account. 

Petersson et al.\ \cite{Petersson2016} derived stationary source discretizations for hyperbolic partial differential equations discretized with finite difference methods. They proved that by enforcing both moment conditions and smoothness conditions, design-order convergence is achieved away from the source. Their work builds on the work by Waldén \cite{Walden1999}, who developed theory for 1D point source discretizations for elliptic and parabolic partial differential equations. This theory was extended to source discretizations in higher dimensions by Tornberg and Engquist \cite{Tornberg2004}. A different class of point source discretizations that works well with level set methods was proposed by Zahedi and Tornberg \cite{Zahedi2010}. This type of point source has compact support in Fourier space. As a result, the source discretization is formally global in physical space. However, the source discretization coefficients decay rapidly away from the source position, which makes it possible to window the source discretization to a finite width and still satisfy a given error tolerance.

The objective of this paper is to extend the work of Petersson et al.\ \cite{Petersson2016} to moving sources. Interestingly, although their source discretization is valid for any fixed source position $x_0$, a straightforward method-of-lines discretization that uses their source fails to converge when $x_0$ is time-dependent (see Section \ref{sec:motivation}). To understand why, we repeat the accuracy analysis in Petersson et al.\ \cite{Petersson2016} with a nonzero source velocity taken into account. The analysis reveals two problems with the method-of-lines approach. First, it may excite modes whose numerical phase velocity equals the source velocity, causing a ``numerical sonic boom''. Second, the source spectrum amplitude depends on the distance between the source position and the closest grid point. As the source moves, its spectrum fluctuates with a period proportional to $h^{-1}$, where $h$ denotes the grid spacing. Our convergence proof relies on a ``motion-consistent'' source discretization with position-independent spectrum amplitude. 

This paper is organized as follows. In Section \ref{sec:motivation}, the need for a new source discretization is motivated by a comparison between different discretizations of the acoustic wave equation in one dimension. In Section \ref{sec:modelproblem}, we study the exact solution to the one-dimensional advection equation with a moving point source. Section \ref{sec:discr} introduces a spatial discretization of the advection equation. In Section \ref{sec:source}, the motion-consistent source is introduced, and design-order convergence is proved for the advection equation. Next, in Section \ref{sec:window}, we show that the motion-consistent source, which formally has global support in physical space, can be windowed to a width proportional to $\sqrt{N}$ grid points on an $N$-point grid. Section \ref{sec:implementation} describes the numerical implementation of the motion-consistent source. The convergence properties are verified by numerical experiments with the advection equation in Section \ref{sec:numexp1D}. In Section \ref{sec:numexp2D}, numerical experiments that indicate design-order convergence for the two-dimensional wave equation with an accelerating source are presented. Section \ref{sec:conclusion} concludes the work.   
 
\section{Motivation} \label{sec:motivation}
To motivate the need for a new type of point source discretization, we consider the acoustic wave equation on the real line, 
\begin{equation}
\label{eq:waveeq1d}
\begin{aligned}
\rho \dd{v}{t}+ \theta_x &= 0, \\
\frac{1}{K}\dd{\theta}{t} + v_x &= g(t) \delta(x-x_0(t)),
\end{aligned}
\end{equation}
where $\theta$ is pressure, $v$ is particle velocity, $\rho$ is density, $K$ is bulk modulus, $g(t)$ is the source time function, $\delta$ is the Dirac delta distribution, and $x_0(t)$ is the source trajectory. The wave speed is $c = \sqrt{\frac{K}{\rho}}$. In this example we set $K = 1$ and $\rho = 1$, which gives $c = 1$. We impose homogeneous initial data at $t=0$.
The smoothness of the solution is determined by the smoothness of $g$ and $x_0$. We choose the Gaussian source time function
\begin{equation} \label{eq:Gauss}
 	g(t) = \frac{1}{\sigma \sqrt{2 \pi}} e^{-\frac{(t-t_0)^2}{2\sigma^2}},
\end{equation} 
with $\sigma = \frac{1}{5}$ and $t_0 = 2$. We let the source move with constant velocity $v_0=0.3$ and set
\begin{equation}
	x_0(t) = 1.6 + v_0t.
\end{equation}
Figure \ref{fig:true} shows the pressure component of the exact solution at time $t = 2$. 

\begin{figure}[hbt]
\includegraphics[width=0.7\linewidth]{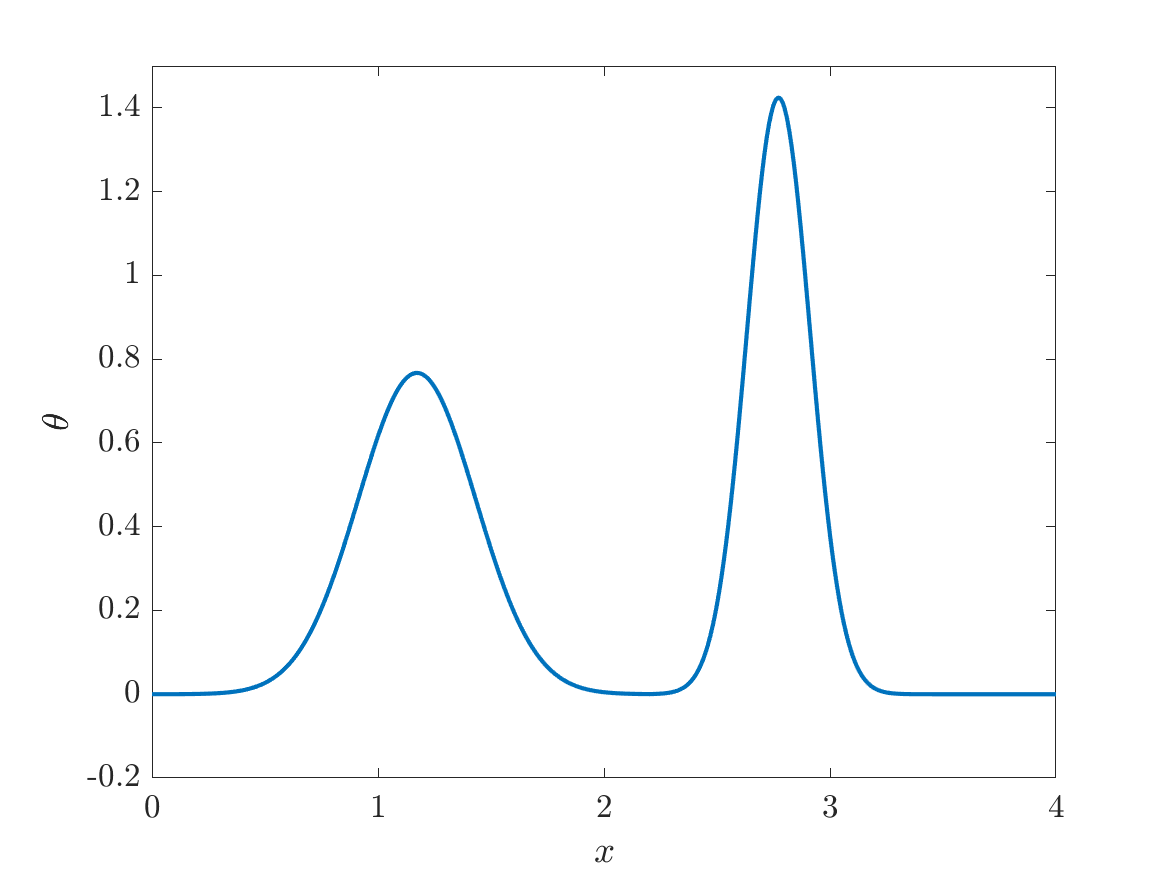}
\caption{ Exact solution of the acoustic wave equation with a moving point source and a Gaussian source time function.}
\label{fig:true}
\end{figure}
To discretize \eqref{eq:waveeq1d} we truncate the real line to the interval $[0, 4]$ and impose periodic boundary conditions. We use 400 grid points, which yields the grid spacing $h=0.01$. In what follows, we will pay particular attention to the highest mode $k$ that can be represented on the grid: the mode $\abs{kh}=\pi$, which we refer to as the $\pi$ mode. Our discretizations take the form
\begin{equation}
\label{eq:wave1dDiscrete}
\begin{aligned}
\dd{\bfv}{t} +  \fdder_+ \boldsymbol{\theta} &= 0 \\
\dd{\boldsymbol{\theta}}{t} +  \fdder_- \bfv &= g(t)  \bfd^{x_0(t)}.
\end{aligned}
\end{equation}
where $ \fdder_{\pm}$ are finite difference operators and $\bfd^{x_0(t)}$ denotes the discrete approximation of $\delta(x-x_0(t))$. If a choice of $\bfd^{x_0}$ produces a convergent method, we say that $\bfd^{x_0}$ is a \emph{motion-consistent} discretization of the $\delta$ distribution.

 We will investigate the performance of the following two finite difference (FD) methods:
\begin{itemize}
	\item[(FD 1)] \textbf{Centered stencil}. This method is given by $\fdder_+ = \fdder_- = \fdder$, where $\fdder$ denotes the standard centered fourth order finite difference operator. This method propagates the $\pi$ mode with phase velocity $0$. This implies that there is some wavenumber $k_*$, $\abs{k_*h} < \pi$, which propagates with the source velocity, as illustrated in Figure \ref{fig:disprel}. This method is therefore vulnerable to the \emph{numerical sonic boom}, which is discussed in detail in Section \ref{sec:source}.

	\item[(FD 2)] \textbf{Dual-pair stencil}. This method, introduced in \cite{Dovgilovich15}, is given by $\fdder_{\pm} = \fdder \pm B$, where $\fdder$ is a centered operator and $B=B^\top$ is chosen so that $\fdder_{\pm}$ are upwind/downwind operators. We use the fifth order upwind/downwind operators whose footprints are offset by one compared to the sixth order centered operator. The numerical phase velocity of this method is strictly larger than $v_0=0.3$ (see Figure \ref{fig:disprel}). Hence, this method avoids the numerical sonic boom in the test case studied in this section.
\end{itemize}
\begin{figure}[h]
\centering
\includegraphics[width=0.7\linewidth]{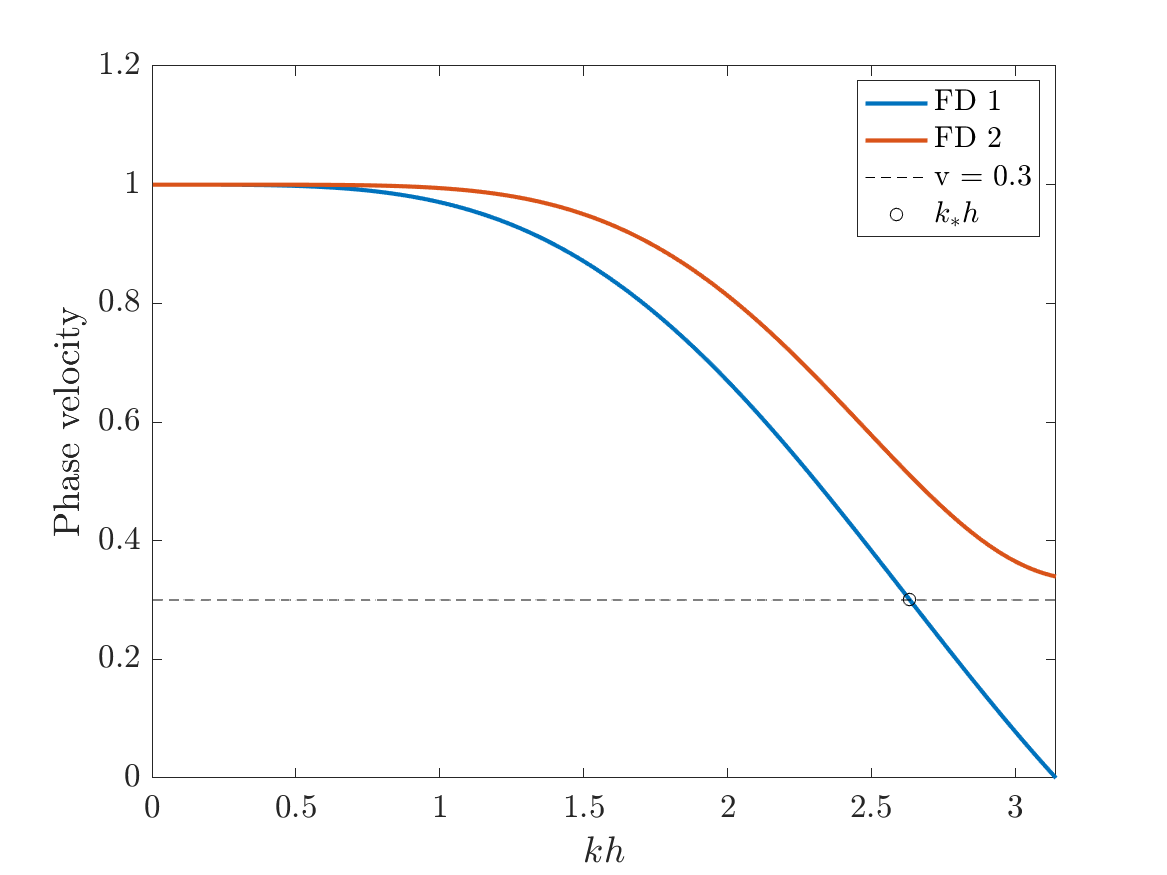}
\caption{Phase velocity for the 4$th$ order centered stencil (FD 1) and the 5$th$ order dual-pair stencil (FD 2). The sonic boom wavenumber corresponding to source velocity $v_0 = 0.3$ is indicated by a circle.}
\label{fig:disprel}
\end{figure}

The stationary source discretizations in \cite{Petersson2010, Petersson2016} are based on discrete $\delta$ distributions $\bfd^{x_0} \approx \delta(x-x_0)$ of the form
\begin{equation} \label{eq:source_phi}
	\bfd_j^{x_0} = \frac{1}{h} \phi \left(\frac{x_j-x_0}{h} \right),
\end{equation}
where the function $\phi$ has compact support. These approximations are accurate for any fixed $x_0$, but are not designed to be motion-consistent. We will refer to them as \emph{motion-inconsistent}. For $p$th order convergence with a stationary source, $\bfd^{x_0}$ must satisfy $p$ moment conditions. When combined with centered finite differences, $\bfd^{x_0}$ additionally needs to satisfy $p$ smoothness conditions, which serve to remove the $\pi$ mode from the spectrum of $\bfd^{x_0}$. When combined with the dual-pair stencil, or any other stencil that propagates the $\pi$ mode with non-zero velocity, no smoothness conditions are required (see e.g.\ \cite{OREILLY2017572} for an example with staggered operators). In this section, we investigate the following three source discretizations:

\begin{itemize}
	\item[(Source 1)] \textbf{Motion-inconsistent source with continuous $\phi$}. This source discretization from \cite{Petersson2016} satisfies 4 moment conditions and 4 smoothness conditions. The function $\phi$ is continuous but not continuously differentiable.
	\item[(Source 2)] \textbf{Motion-inconsistent source with continuously differentiable $\phi$}. This source discretization from \cite{Petersson2010} satisfies 2 moment conditions. The function $\phi$ is continuously differentiable.
	\item[(Source 3)] \textbf{Motion-consistent source}. This is the motion-consistent source, presented in Section \ref{sec:source} of this paper, with 4 moment conditions and 4 \emph{sonic boom conditions} imposed. 
\end{itemize}
The sources are illustrated in Figure \ref{fig:compoldandnewsources}.

\begin{figure}[h]
\centering
\includegraphics[width=0.7\linewidth]{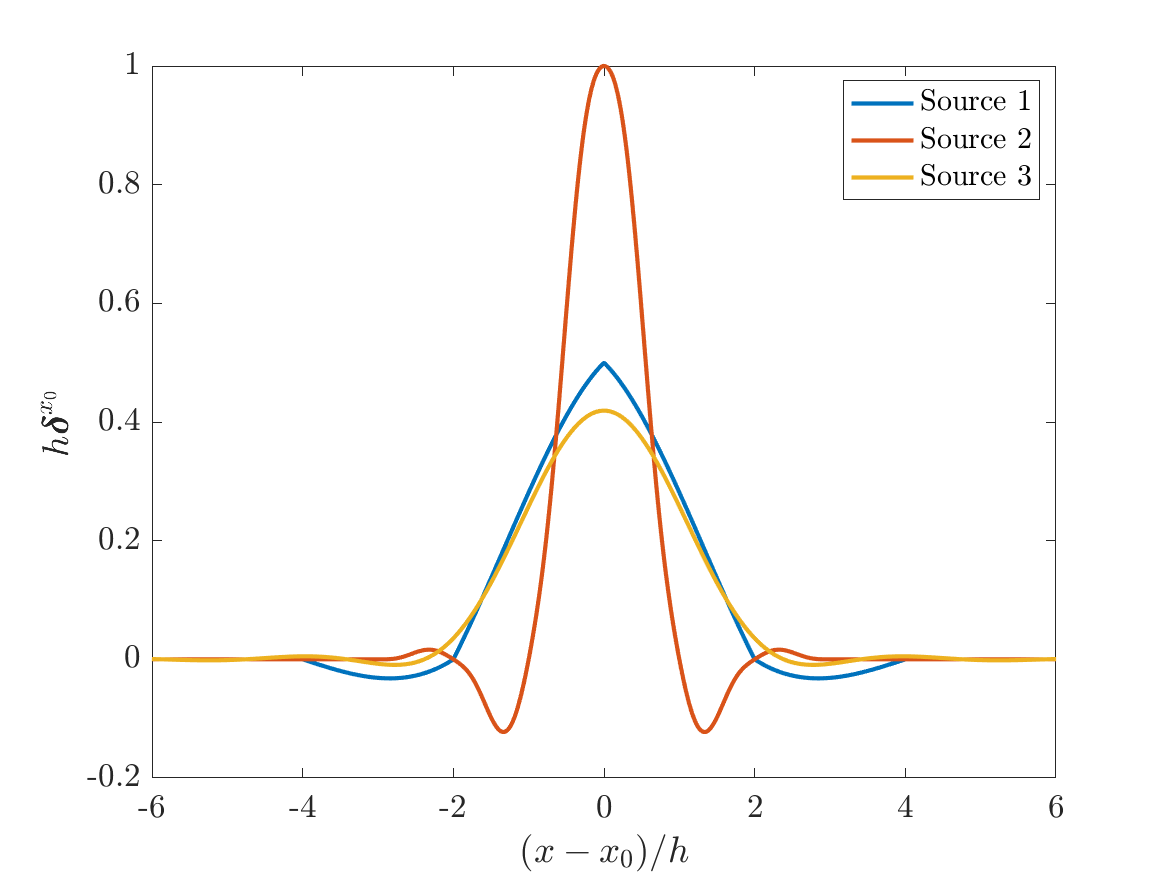}
\caption{Discrete $\delta$ distributions $\bfd^{x_0}$ for the motion-inconsistent source with continuous $\phi$ (Source 1), the motion-inconsistent source with continuously differentiable $\phi$ (Source 2), and the motion-consistent source (Source 3) }
\label{fig:compoldandnewsources}
\end{figure}

\begin{figure}[hbt]
	\subfigcapmargin = 5pt
   \subfigure[Centered finite differences, motion-inconsistent source with continuous $\phi$]{
   \label{subfig:standard_petersson_et_al}
    \includegraphics[width=0.475\textwidth]{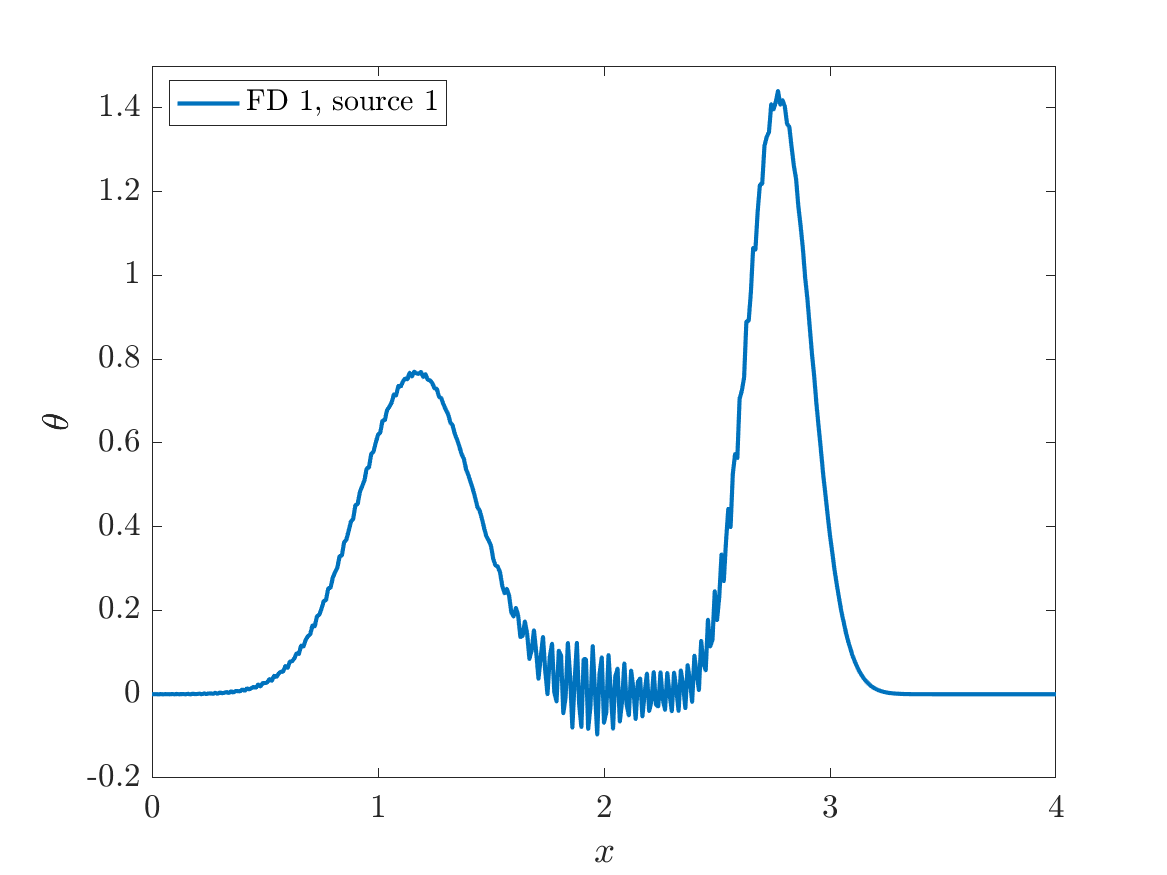}}
  \subfigure[Dual-pair finite differences, motion-inconsistent source with continuous $\phi$]{
  \label{subfig:upwind_petersson_et_al}
    \includegraphics[width=0.475\textwidth]{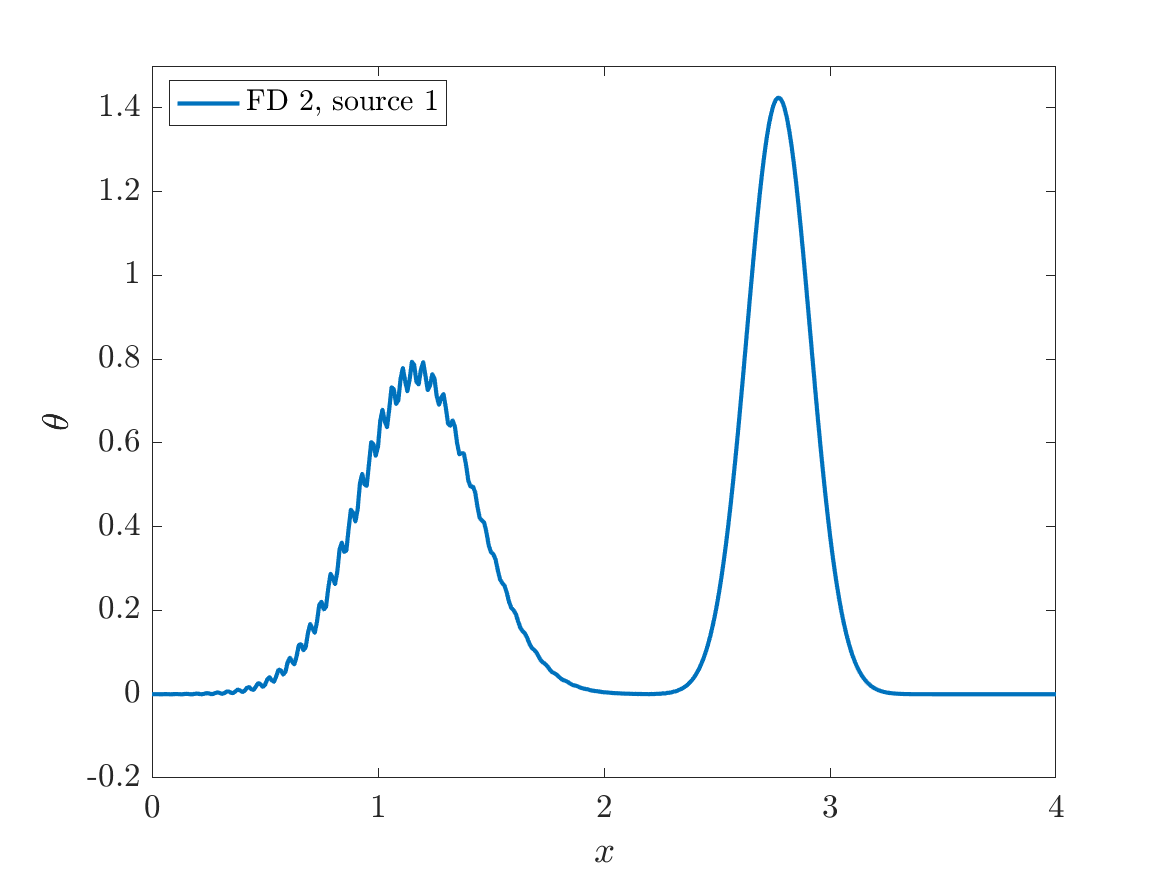}}\\
  \subfigure[Dual-pair finite differences, motion-inconsistent source with continuously differentiable $\phi$]{
  \label{subfig:upwind_petersson_sjogreen}
    \includegraphics[width=0.475\textwidth]{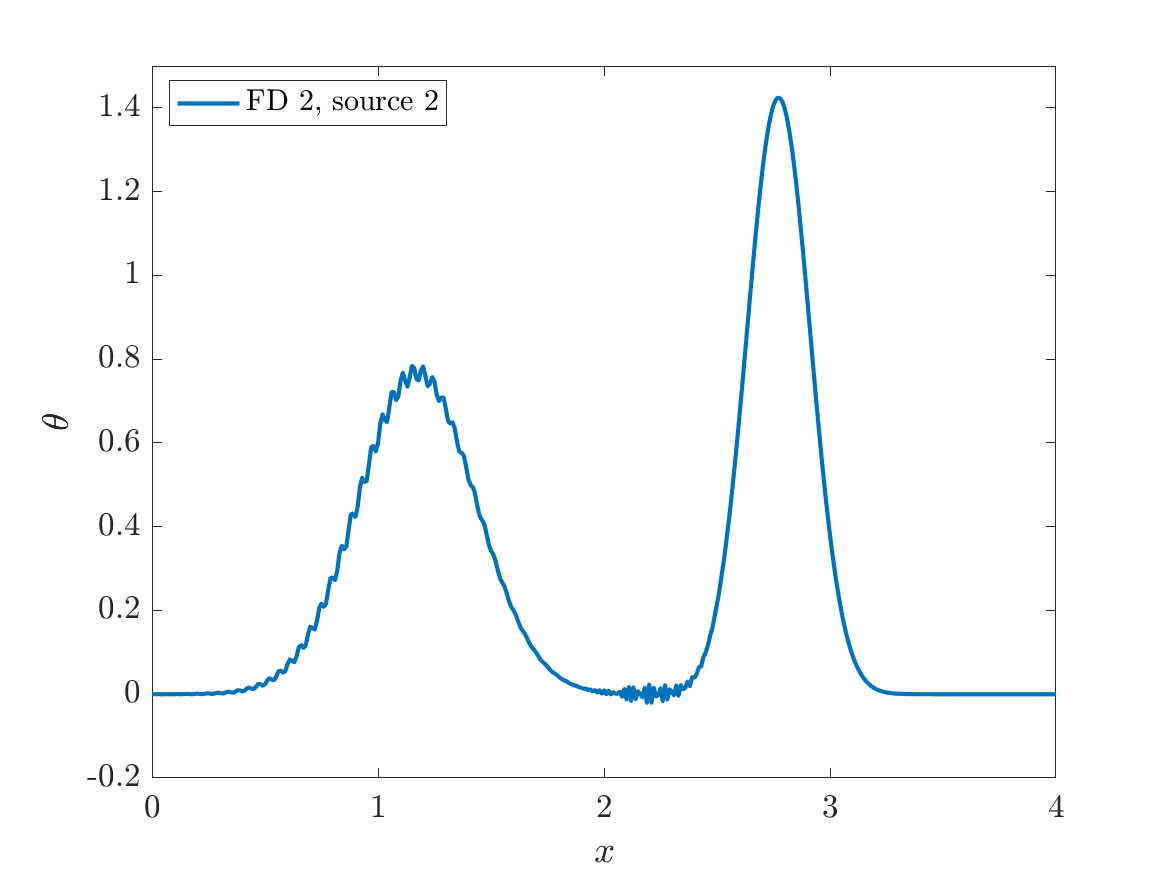}}
  \subfigure[Centered finite differences, motion-consistent source]{
  \label{subfig:FFTsource}
    \includegraphics[width=0.475\textwidth]{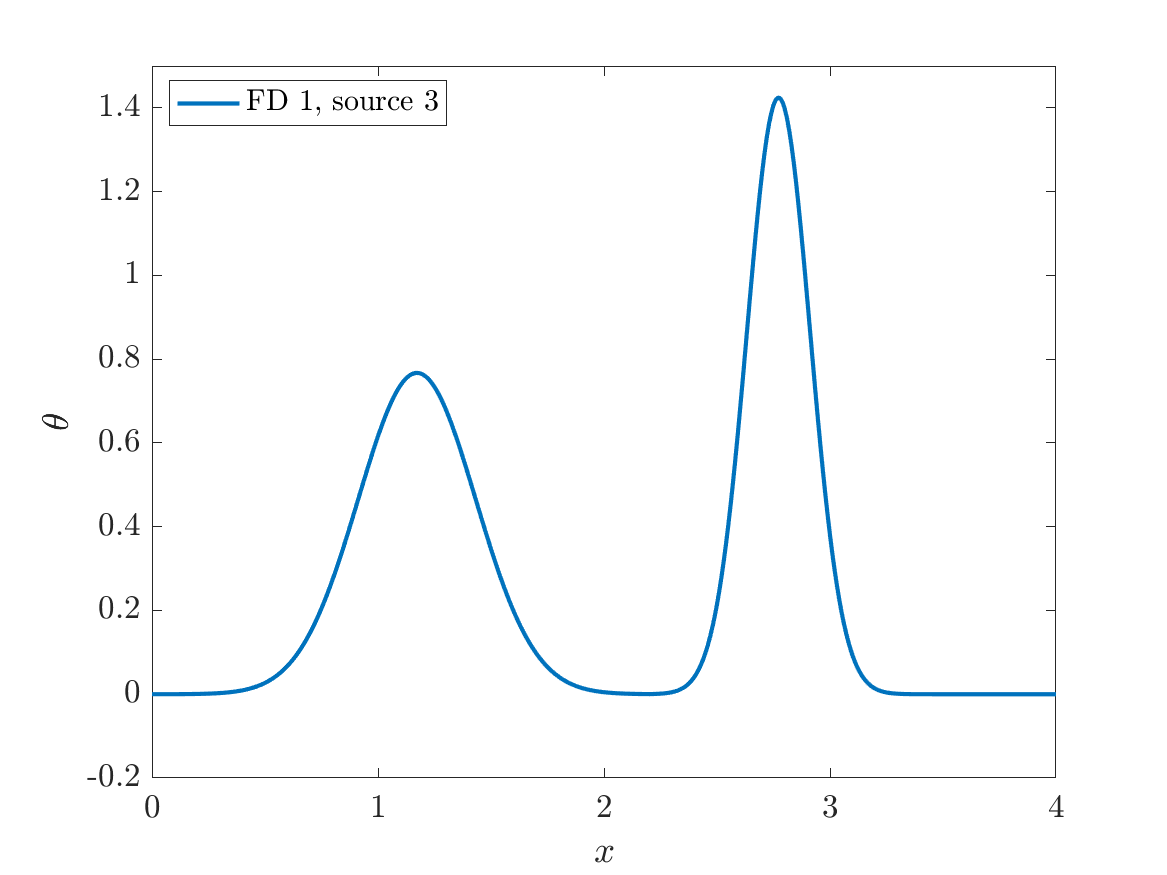}}
  \caption{Numerical solution to \ref{eq:waveeq1d} with different finite difference operators and source discretizations on the domain $x \in [0, \ 4], $ with $h = 0.01$ and $x_0 = 1.6 + 0.3 t$.}
  \label{fig:sourceComparisonWave}
\end{figure}
Figure \ref{fig:sourceComparisonWave} shows the solution obtained with the following four combinations of finite difference method and source discretization:
\begin{enumerate}
	\item \textbf{FD 1, source 1}. Figure \ref{subfig:standard_petersson_et_al} shows large artifacts in the solution. This is to some extent expected, since FD 1 is vulnerable to the numerical sonic boom.
	\item \textbf{FD 2, source 1}. Figure \ref{subfig:upwind_petersson_et_al} shows  artifacts that are smaller than in Figure \ref{subfig:standard_petersson_et_al}, but still clearly visible, even though FD 2 eliminates the numerical sonic boom issue. We conclude that there are additional requirements for motion consistency.
	\item \textbf{FD 2, source 2}. One might hypothesize that regularity of $\phi$ is important for motion consistency. However, Figure \ref{subfig:upwind_petersson_sjogreen} shows no significant improvement over Figure \ref{subfig:upwind_petersson_et_al}, despite $\phi$ being continuously differentiable. This indicates that regularity of $\phi$ is not of primary importance.
	\item \textbf{FD 1, source 3}. Figure \ref{subfig:FFTsource} shows that the motion-consistent source developed in this paper produces no visible artifacts.
\end{enumerate}
All four spatial discretizations were time-integrated with the classical fourth order Runge--Kutta method. Combination 4, with the motion-consistent source, is the only combination without clearly visible artifacts, and also the only combination that actually converges to the true solution as $h \rightarrow 0$. This motivates the need for motion-consistent source discretizations.

\section{Model problem} \label{sec:modelproblem}
To derive a provably motion-consistent source discretization, we consider the advection equation with a moving point source,
\begin{equation} \label{eq:ut_ux}
	u_t + cu_x = g(t) \delta(x - x_0(t)), \quad x \in [0, L], \quad t>0,
\end{equation}
where $u$ is the solution and $c$ the wave speed. We impose periodic boundary conditions and consider $u$ to be $L$-periodic in $x$. We assume that the source trajectory, $x_0$, and the source time funtion, $g$, are sufficiently smooth functions of $t$. We further assume that $g(t)=0$ for $t\leq 0$ and for $t\geq t_1>0$, where $t_1$ is some finite time. For simplicity we impose the homogeneous initial condition
\begin{equation}
	u(x,0) = 0, \quad x \in [0, L].
\end{equation}
While the source is active, i.e., for $t < t_1$, the exact solution may be discontinuous. We shall not attempt to prove high-order convergence for $t < t_1$. Hence, all of the subsequent convergence analysis assumes $t \geq t_1$.
We will also restrict the analysis to constant source velocity $v_0<c$, which allows us to set $x_0(t) = v_0 t$. Numerical experiments indicate that the convergence rates that we prove carry over to accelerating sources.

We will use the inner product
\begin{equation}
	\ip{u}{v}{} = \frac{1}{L} \myint{0}{L}{\complex{u}(x) v(x) }{x},
\end{equation}
where $\complex{u}$ denotes the complex conjugate of $u$. The Fourier modes, $e^{ikx}$, where
\begin{equation}
 	k \in  \left\{0, \pm \frac{2\pi}{L}, \pm \frac{4\pi}{L}, \ldots \right\} =: K_{\infty},
 \end{equation}
 are orthonormal in the inner product. The Fourier series representation of an $L$-periodic function $u$ is
 \begin{equation}
	u(x) = \sum \limits_{k \in K_{\infty}} \fcoeff{u}_k e^{i k x},
\end{equation}
where the Fourier coefficients, $\fcoeff{u}_k$, are given by
\begin{equation}
	\fcoeff{u}_k = \ip{e^{ik x}}{u}{} = \frac{1}{L} \myint{0}{L}{e^{-ikx}u(x) }{x}, \quad k \in K_{\infty}.
\end{equation}
The Fourier coefficients of the $\delta$ distribution are
\begin{equation} \label{eq:fourier_delta}
	\fcoeff{\delta}_k = \frac{1}{L} \myint{0}{L}{e^{-ikx} \delta(x-x_0) }{x} = \frac{1}{L}e^{-ikx_0}.
\end{equation}
Taking the inner product of the advection equation \eqref{eq:ut_ux} with $e^{ik x}$ leads to a system of ordinary differential equations for the Fourier coefficients of $u$,
\begin{equation}
	\dd{\fcoeff{u}_{k}}{t} + i k c \fcoeff{u}_{k} = \frac{g(t)}{L} e^{-i k x_0}, \quad k \in K_{\infty},
\end{equation}
with initial conditions $\fcoeff{u}_{k}(0)=0$.
The solution is
\begin{equation}
	\fcoeff{u}_{k}(t) = \frac{1}{L} \myint{0}{t}{g(\tau) e^{i k (c \tau - x_0(\tau) - ct)}}{\tau} .
\end{equation}
In the case of constant source velocity, $x_0(t) = v_0 t$, the solution simplifies to
\begin{equation} \label{eq:solution_cont}
	\fcoeff{u}_{k}(t) = \frac{1}{L} \myint{0}{t}{g(\tau) e^{i k ((c - v_0)\tau - c t)}}{\tau}.
\end{equation}
Consider $t\geq t_1$ so that $g(t)=0$. Assuming that $v_0 \neq c$ and that $g$ has at least $n$ continuous derivatives, applying the integration-by-parts formula $n$ times results in
\begin{equation} \label{eq:cont_ibp_n}
	\fcoeff{u}_{k}(t) = \frac{(-1)^n}{L (ik(c-v_0))^n } \myint{0}{t}{g^{(n)}(\tau) e^{i k ((c - v_0)\tau -ct)}}{\tau}.
\end{equation}
Taking the absolute value shows that the Fourier coefficients decay with $\abs{k}$ according to
\begin{equation} \label{eq:fcoeff_decay}
	\abs{\fcoeff{u}_{k}(t)} \leq \frac{C}{\abs{k}^n},
\end{equation}
for some constant $C$, which is independent of $k$. The decay of the Fourier coefficients is a consequence of the smoothness of $u$. Note however, that if $v_0=c$ so that the source moves with the wave speed, then $u$ develops the  discontinuity known as the sonic boom. In this case, the Fourier coefficients are
\begin{equation}
	\fcoeff{u}_{k}(t) = \frac{e^{-i k  c t}}{L} \myint{0}{t}{g(\tau) }{\tau}.
\end{equation}
and there is no decay with $\abs{k}$.

\section{Spatial discretization} \label{sec:discr}
The interval $[0, L]$ is discretized by an equidistant grid with $M$ points, $x_j = (j-1)h$, $j = 1,  2, \dots, M$, where the grid spacing is $h = L/M$. For simplicity, we assume that $M$ is odd such that $M=2N+1$, where $N$ is an integer. This is no actual restriction and our results hold also when $M$ is even. We use boldface font to denote grid vectors,
\begin{equation}
	\bfu = [\bfu_1, \bfu_2, \ldots, \bfu_{M}]^\top.
\end{equation}
The discrete inner product is defined as
\begin{equation}
	\ip{\bfu}{\bfv}{h} = \frac{1}{L} \sum \limits_{j=1}^{M} h \complex{\bfu}_j \bfv_j.
\end{equation}
The first $2N+1$ Fourier modes, i.e., $e^{ikx}$, where
\begin{equation}
 	k \in  \left\{0, \pm \frac{2\pi}{L}, \ldots, \pm N\frac{2\pi}{L}  \right\} =: K_N,
 \end{equation}
 are orthonormal in the discrete inner product. Thus, any grid vector can be represented by its Fourier expansion,
 \begin{equation}
	\bfu_j = \sum \limits_{k \in K_N}^{} \fcoeff{\bfu}_k e^{i k x_j},
\end{equation}
where the Fourier coefficients, $\fcoeff{\bfu}_k$, are given by
\begin{equation}
	\fcoeff{\bfu}_k = \ip{e^{ik x}}{\bfu}{h} = \frac{1}{L} \sum \limits_j h  e^{-ikx_j} \bfu_j, \quad k \in K_N.
\end{equation}

Let $\bfd^{x_0}$ be a discrete approximation of $\delta(x-x_0)$ and let $\fdder$ denote a centered finite difference operator. The semidiscrete approximation of the advection equation \eqref{eq:ut_ux} reads
\begin{equation}
	\label{eq:semidisc_ut_ux}
	\dd{\bfu}{t} + c \fdder \bfu = g(t) \bfd^{x_0} .
\end{equation}
Inserting the Fourier expansions yields
\begin{equation}
	\dd{\fcoeff{\bfu}_k}{t} + c\fcoeff{\fdder}_k \fcoeff{\bfu}_k = g(t) \fcoeff{\bfd}^{x_0}_k,
\end{equation}
where $\fcoeff{\fdder}_k$ denotes the symbol of the finite difference operator. For a $p$th order accurate centered finite difference operator, the symbol can be written as (see \cite{GustafssonKreissOliger})
\begin{equation}
	\fcoeff{\fdder}_k = ik\phat(kh),
\end{equation}
where $\phat$ is real-valued and even, and
\begin{equation} \label{eq:phat_ordo}
	\phat(kh) = 1 + \ordo((kh)^p).
\end{equation}
For notational convenience we will henceforth let the dependence on $kh$ be implied and write simply $\phat$ instead of $\phat(kh)$. The $\ordo$ notation in \eqref{eq:phat_ordo} means that there exist constants $C$ and $\kappa_0$ such that
\begin{equation}
	\abs{\phat-1} \leq C \abs{kh}^p, \quad \abs{kh} \leq \kappa_0.
\end{equation}
The $\ordo$ notation will be used frequently in subsequent sections.

The solution to the semidscrete problem \eqref{eq:semidisc_ut_ux} is
\begin{equation} \label{eq:fcoeff_solution_discrete}
	\fcoeff{\bfu}_k  = \myint{0}{t}{ g(\tau) \fcoeff{\bfd}^{x_0(\tau)}_k e^{cik\phat (\tau-t)} }{\tau}.
\end{equation}

\section{Motion-consistent source discretization} \label{sec:source}
We propose a discrete $\delta$ distribution whose Fourier coefficients satisfy
\begin{equation} \label{eq:fss_def}
	\fcoeff{\bfd}^{x_0}_k = \frac{e^{-ikx_0}}{L} F(kh),
\end{equation}
where $F$ is a bounded and sufficiently smooth real-valued even function. We will prove that $\bfd^{x_0}$ is motion-consistent. Notice that the spectrum of $\bfd^{x_0}$ is fixed in the sense that the magnitude of $\fcoeff{\bfd}^{x_0}_k$ is independent of $x_0$. It is straightforward to verify that $\bfd^{x_0}$ satisfies the shift theorem:
\begin{equation}
	\fcoeff{\bfd}_k^{x_0} = e^{-ikx_0} \fcoeff{\bfd}_k^{0}.
\end{equation}

The drawback of the motion-consistent source is that it formally has global support in physical space, which would make it impossible to use in a finite, non-periodic domain. However, we will later show that the elements of $\bfd^{x_0}$ decay rapidly in magnitude away from $x_0$. It is thus possible to apply a window $W$, of finite width, such that
\begin{equation}
	\norm{ W\bfd^{x_0} - \bfd^{x_0} }_{\infty} \leq \mu,
\end{equation}
for any given error tolerance $\mu$. In practice, we propose to use the windowed source $W\bfd^{x_0}$. First, however, we will analyze the periodic problem and show $p$th order convergence for the global source $\bfd^{x_0}$. Section \ref{sec:window} discusses how to choose $W$ so that $p$th order convergence is retained.
 
By \eqref{eq:fcoeff_solution_discrete}, the Fourier coefficients of the discrete solution obtained with the motion-consistent source are
\begin{equation}
	\fcoeff{\bfu}_k(t) = \frac{F(kh)}{L} \myint{0}{t}{ g(\tau) e^{-ikx_0(\tau)} e^{cik\phat (\tau-t)}  }{\tau}.
\end{equation}
For constant source velocity, we obtain
\begin{equation} \label{eq:fcoeff_discr_2}
	\fcoeff{\bfu}_k(t)  = \frac{F(kh)}{L} \myint{0}{t}{ g(\tau) e^{ik((c\phat-v_0)\tau -c \phat t)}  }{\tau}.
\end{equation}
Assuming that $c\phat \neq v_0$, integrating by parts $n$ times yields,
\begin{equation} \label{eq:fcoeff_discr_ibp}
	\fcoeff{\bfu}_k(t) = F(kh) \frac{(-1)^n}{L(ik(c\phat-v_0))^n} \myint{0}{t}{ g^{(n)}(\tau) e^{ik((c\phat-v_0)\tau -c \phat t)} }{\tau} ,
\end{equation}
which is a discrete analog of \eqref{eq:cont_ibp_n}. Recall that we used \eqref{eq:cont_ibp_n} to show that the Fourier coefficients of the exact solution decay with $\abs{k}$. Using \eqref{eq:fcoeff_discr_ibp} to show similar decay of the discrete Fourier coefficients is an essential part of our convergence proof. The reason why the source discretizations proposed by Petersson et al.\ are not motion-consistent is that they do not admit a result similar to \eqref{eq:fcoeff_discr_ibp}. We elaborate on this fact in Section \ref{sec:source_petersson}.

\subsection{Motion-incosistent sources} \label{sec:source_petersson}
Let $\bfds^{x_0}$ denote the discrete $\delta$ distribution proposed by Petersson et al.\ for stationary sources. Unlike the motion-consistent discretization, which we designed in Fourier space, $\bfds^{x_0}$ is designed to have minimal support in physical space. Petersson et al.\ state that the Fourier coefficients take the form
\begin{equation} \label{eq:fourier_delta_petersson}
	\fcoeff{\bfds}^{x_0}_k = \frac{e^{-ikx_0}}{L} f_{x_0}(kh),
\end{equation}
for some function $f_{x_0}$. Here we have taken the liberty of adapting their notation to the current setting by adding the subscript $x_0$, to highlight that $f_{x_0}$ may change with $x_0$. 
The source $\bfds^{x_0}$ does not satisfy the shift theorem, because
\begin{equation}
	\fcoeff{\bfds}^{x_0}_k - e^{-ikx_0} \fcoeff{\bfds}_k^{0} = \frac{e^{-ikx_0}}{L} \left(f_{x_0}(kh) - f_{0}(kh)\right).
\end{equation}
and $f_{x_0} = f_0$ does not hold, in general. The shift theorem would be satisfied if $f_{x_0} = f_0$ for any $x_0$ or, equivalently,
\begin{equation}
	\dd{}{x_0} f_{x_0}(kh) = 0.
\end{equation}

Let us now attempt to derive a result similar to \eqref{eq:fcoeff_discr_ibp} for $\bfds^{x_0}$. With constant source velocity, the Fourier coefficients of the corresonding discrete solution, which we here denote by $\fcoeff{\bfv}_k$, are
\begin{equation} \label{eq:fcoeff_discr_2_petersson}
	\fcoeff{\bfv}_k(t)  = \frac{1}{L} \myint{0}{t}{ f_{x_0}(kh) g(\tau) e^{ik((c\phat-v_0)\tau -c \phat t)}  }{\tau}.
\end{equation}
The next step is to integrate by parts. We then need to consider the time derivative of $f_{x_0}(kh)$. If the time derivative is zero, we can proceed to derive the desired result. By the chain rule, we have
\begin{equation}
	\dd{}{t} f_{x_0}(kh) = v_0 \dd{}{x_0} f_{x_0}(kh).
\end{equation}
Notice that the time derivative is zero if 
\begin{equation}
v_0=0 \quad \mbox{or} \quad \dd{}{x_0} f_{x_0}(kh) = 0 .
\end{equation}
That is, we need either a stationary source or a discrete $\delta$ distribution that satisfies the shift theorem. Notice that for a stationary source, the requirement on $f_{x_0}$ vanishes.

Since $f_{x_0}$ does not depend on the absolute position, only on the distance to the nearest grid point, we have $f_{x_0}=f_{x_0+h}$. If $\mathrm{d}/\mathrm{d}x_0 \, f_{x_0}$ is not identically zero, it follows that 
\begin{equation}
	\dd{}{x_0} f_{x_0}(kh) \propto h^{-1},
\end{equation}
and it appears impossible to bound $\mathrm{d}/\mathrm{d}x_0 \, f_{x_0}$ as $h \rightarrow 0$. We believe that these oscillations in $f_{x_0}$ are the cause of the numerical artifacts in Figures \ref{subfig:upwind_petersson_et_al} and \ref{subfig:upwind_petersson_sjogreen}.

\subsection{Moment conditions}
For a stationary source, requiring the discrete $\delta$ distribution to satisfy an appropriate number of \emph{moment conditions} ensures accuracy for low wavenumbers. The definition of moment conditions used by Petersson et al. states that $\bfds^{x_0}$ satisfies $m$ moment conditions if
\begin{equation} \label{eq:moment_def}
	\ip{x^\nu}{\bfds^{x_0}}{h} = x_0^\nu, \quad \nu = 0, \ldots, m-1.
\end{equation}
Petersson et al.\ showed that the moment conditions \eqref{eq:moment_def} imply that (assuming $m \geq 1$) 
\begin{equation} \label{eq:moment_fourier}
\begin{aligned}
	f_{x_0}(0) &= 1, \\
	f_{x_0}^{(\nu)}(0) &= 0, \quad \nu = 1, \ldots, m-1.
\end{aligned}
\end{equation}
Since the motion-consistent source is defined in terms of its Fourier coefficients, it is natural to also formulate moment conditions in the Fourier domain. Hence, rather than attempting to satisfy \eqref{eq:moment_def}, we say that $\bfd^{x_0}$ satisfies $m$ moment conditions if
\begin{equation} \label{eq:moment_fourier_fss}
\begin{aligned}
	F(0) &= 1, \\
	F^{(\nu)}(0) &= 0, \quad \nu = 1, \ldots, m-1.
\end{aligned}
\end{equation}
A consequence of $m$ moment conditions is that $F$ satisfies
\begin{equation}
	F(kh) = 1 + \ordo((kh)^m).
\end{equation}

\subsection{Sonic boom conditions}
The numerical sonic boom manifests for wavenumbers in the vicinity of $k_*$ such that $c\phat(k_*h) = v_0$, which means that the finite difference operator propagates the $k_*$ mode with the source velocity. Equation \eqref{eq:fcoeff_discr_ibp} indicates that we cannot derive a strong enough bound on $\fcoeff{\bfu}_k$ near $k=k_*$ unless $F\rightarrow 0$ sufficiently fast as $k\rightarrow k_*$. To ensure that $F\rightarrow 0$ sufficiently fast, we introduce \emph{sonic boom conditions} that we require $F$ to satisfy.

Let $k_*$ denote the smallest positive sonic boom wavenumber. We say that $F$ satisfies $s$ sonic boom conditions if $F$ is smooth on $(0, k_*h)$ and
\begin{equation}
\begin{aligned}
	F^{(\nu)}(k_*h) &= 0, \quad \nu = 0,\ldots,s-1, \\
	F(k h) &= 0, \quad k > k_*.
\end{aligned}
\end{equation}
If there are no sonic boom wavenumbers, i.e., no solutions to $c\phat(k_*h) = v_0$, then the sonic boom conditions do not imply any conditions on $F$.

\begin{remark}
It may not be strictly necessary to require $F(k h) = 0$ for $k > k_*$, but it is convenient. Wavenumbers $k>k_*$ are generally under-resolved and our experience is that removing them from the source spectrum tends to make the discrete solution smoother without causing any loss of accuracy.
\end{remark}

The sonic boom conditions may be viewed as a generalization of the \emph{smoothness conditions} introduced by Petersson et al.\ for stationary sources. The effect of $s$ smoothness conditions is that
\begin{equation} \label{eq:smoothness_petersson}
	f^{(\nu)} _{x_0}(\pi) = 0, \quad \nu = 0, \ldots, s-1.
\end{equation}
Note that Petersson et al.\ considered centered finite difference operators, for which $kh=\pi$ is the only solution to $\phat(kh)=0$ (see Figure \ref{fig:disprel}). Since the source velocity is zero, this solution corresponds to the sonic boom wavenumber, i.e., $k_*h = \pi$. The smoothness conditions \eqref{eq:smoothness_petersson} are thus the sonic boom conditions corresponding to $v_0=0$.

The sonic boom conditions allow us to prove the following lemma, which is essential to the convergence proof.

\begin{lemma} \label{lemma:symbol_at_boom}
If $F$ satisfies $s$ sonic boom conditions, then there is a constant $C$ such that
\begin{equation} \label{eq:lemma}
	\frac{\abs{F(kh)}}{\abs{c\phat - v_0}^s} \leq C, \quad \abs{k} \leq k_*.
\end{equation}
\end{lemma}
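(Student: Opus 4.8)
The plan is to pass to the rescaled variable $\xi = kh$, which for $\abs{k}\le k_*$ lies in the fixed compact interval $[-k_*h,\,k_*h]$ — fixed because $k_*h$ is determined by the equation $c\phat(k_*h)=v_0$ alone, independently of $h$. Since $F$ and $\phat$ are even, it suffices to bound $\abs{F(\xi)}/\abs{\psi(\xi)}^s$ on $[0,k_*h]$, where $\psi(\xi):=c\phat(\xi)-v_0$. From $v_0<c$ we get $\psi(0)=c-v_0>0$, and because $k_*$ is the \emph{smallest} positive sonic boom wavenumber, $\psi$ has no zero in $[0,k_*h)$; hence $\psi>0$ on $[0,k_*h)$ and $\psi(k_*h)=0$. (If $c\phat-v_0$ has no zero at all on the grid, it is continuous and nonvanishing on a compact interval, hence bounded below by a positive constant, and the lemma follows at once from boundedness of $F$; so from now on assume a sonic boom wavenumber exists and $s\ge 1$.)

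Next I would factor the quotient, for $\xi\in[0,k_*h)$, as
\[
	\frac{\abs{F(\xi)}}{\psi(\xi)^s}
	= \underbrace{\frac{\abs{F(\xi)}}{(k_*h-\xi)^s}}_{=:\,r(\xi)}\;\cdot\;
	\underbrace{\left(\frac{k_*h-\xi}{\psi(\xi)}\right)^{\!s}}_{=:\,q(\xi)^s},
\]
and bound the two factors separately. For $r$: by the $s$ sonic boom conditions $F^{(\nu)}(k_*h)=0$, $\nu=0,\dots,s-1$, Taylor's theorem with Lagrange remainder (using the assumed smoothness of $F$) gives $\abs{F(\xi)}\le \big(\tfrac{1}{s!}\max_{[0,k_*h]}\abs{F^{(s)}}\big)(k_*h-\xi)^s$, so $r$ is bounded on $[0,k_*h)$. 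For $q$: away from $k_*h$, on $[0,k_*h-\epsilon]$, $\psi$ is continuous and positive, hence $\ge\delta>0$, so $q\le k_*h/\delta$ there; near $k_*h$, on $[k_*h-\epsilon,k_*h)$, the mean value theorem applied to $\psi$ at $k_*h$ gives $\psi(\xi)=\psi'(\tilde c_\xi)(\xi-k_*h)$ with $\tilde c_\xi$ between $\xi$ and $k_*h$, whence $q(\xi)=-1/\psi'(\tilde c_\xi)$, which is bounded once $\epsilon$ is small enough that $\abs{\psi'}$ stays away from $0$ on $[k_*h-\epsilon,k_*h]$. Putting the pieces together, $\abs{F(\xi)}/\psi(\xi)^s\le \tilde C\,Q^s=:C$ on $[0,k_*h)$, and the bound persists at $\xi=k_*h$ in the limiting sense; since $\tilde C$, $Q$, and hence $C$ depend only on $F$ and $\phat$, the constant is independent of $h$.

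The only nontrivial step, and the one that needs care, is the factor $q$ near $k_*h$: I have tacitly used that $\psi$ has a \emph{simple} zero at $k_*h$, i.e.\ $c\phat'(k_*h)\neq 0$. This is the point where the sonic boom conditions and the dispersion relation must match up — $s$ sonic boom conditions make $F$ vanish to order $s$ at $k_*h$, which cancels exactly $s$ simple zeros of $\psi$, but if $\phat$ were tangent to the level $v_0/c$ at $k_*h$ then $\psi^s$ would vanish faster than $(k_*h-\xi)^s$ and $s$ conditions would no longer suffice. Transversality of the crossing therefore has to be recorded as a (mild) standing assumption; it holds for the centered finite difference operators considered here, as the dispersion curves in Figure~\ref{fig:disprel} make clear. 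Everything else is compactness plus Taylor expansion.
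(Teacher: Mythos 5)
Your proof is correct and follows essentially the same route as the paper's: split $[0,k_*h)$ into a region away from $k_*h$ (where boundedness of $F$ and a positive lower bound on $c\phat-v_0$ suffice) and a neighbourhood of $k_*h$ (where Taylor's theorem with the $s$ sonic boom conditions gives $\abs{F}\leq C\abs{kh-k_*h}^s$ while the symbol vanishes only to first order), with the simple-zero/transversality assumption that you flag being exactly the paper's remark that strictly decreasing $\phat$ gives multiplicity $\alpha=1$. Your factorization into $r\cdot q^s$ and use of the mean value theorem is only a cosmetic repackaging of the paper's two Taylor estimates, so no substantive difference remains.
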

\begin{proof}
Note that
\begin{equation}
	\frac{\abs{F(kh)}}{\abs{c\phat - v_0}^s} = \frac{1}{c^s} \frac{\abs{F(kh)}}{\abs{\phat - \frac{v_0}{c}}^s} = \frac{1}{c^s} \frac{\abs{F(kh)}}{\abs{G(kh)}^s}
\end{equation}
where 
\begin{equation}
	G(kh) = \phat- \frac{v_0}{c}.
\end{equation}
Assume that $G$ has a zero at $k_*h$, of multiplicity $\alpha$. By Taylor's theorem, there are positive constants $\Delta \kappa$ and $C_G$ such that
\begin{equation} \label{eq:taylor_G}
	\abs{G(kh)} \geq C_G \abs{kh - k_*h}^{\alpha}, \quad k_*h - \Delta \kappa \leq kh \leq k_*h.
\end{equation}
Due to the sonic boom conditions and Taylor's theorem, there is a constant $C_F$ such that
\begin{equation} \label{eq:taylor_F}
	\abs{F(kh)} \leq C_F \abs{kh - k_*h}^s, \quad k_*h - \Delta \kappa \leq kh \leq k_*h.
\end{equation}
We divide the interval $[0, k_* h)$ into two subintervals: $I_1 = [0, k_*h-\Delta \kappa]$ and $I_2 = [k_*h-\Delta \kappa, k_*h)$. On $I_1$, $G$ is nonzero and hence $\abs{G} \geq G_{min} > 0$. We also have $\abs{F} \leq \norm{F}_{\infty} < \infty$, by assumption.
It follows that
\begin{equation}
	\frac{\abs{F(kh)}}{\abs{G(kh)}^s} \leq \frac{\norm{F}_\infty }{G_{min}^s} =: C_1.
\end{equation}
On $I_2$, using \eqref{eq:taylor_G} and \eqref{eq:taylor_F} yields
\begin{equation}
	\frac{\abs{F(kh)}}{\abs{G(kh)}^s} \leq \frac{C_F \abs{kh - k_*h}^s}{C_G^s \abs{kh - k_*h}^{\alpha s} }.
\end{equation}
We note that for a general multiplicity $\alpha$, $F$ would actually need to satisfy $\alpha s$ sonic boom conditions. All the finite difference operators considered in this paper are such that $\phat$ is a strictly decreasing function of $\abs{k}$, which allows us to assume $\alpha=1$. We obtain
\begin{equation}
	\frac{\abs{F(kh)}}{\abs{G(kh)}^s} \leq \frac{C_F \abs{kh - k_*h}^s}{C_G^s \abs{kh - k_*h}^{s} } = \frac{C_F}{C_G^s} =: C_2.
\end{equation}
Combining the two subintervals, we find that \eqref{eq:lemma} holds with
\begin{equation}
	C = \max{\left(\frac{C_1}{c^s}, \frac{C_2}{c^s}\right)}.
\end{equation}
\end{proof}

\subsection{Practical implementation of moment and sonic boom conditions}
While there are infinitely many choices of $F$ that satisfy $m$ moment conditions and $s$ sonic boom conditions, in our implementation we have opted to define $F$ as
\begin{equation} \label{eq:F_poly}
	F(\kappa) = \left\{ 
	\begin{array}{ll}
	Q_{m+s-1}(\kappa), & 0 \leq \kappa \leq k_*h \\
	0, & \kappa > k_*h
	\end{array}
	\right.,	
\end{equation}
where $Q_{m+s-1}$ denotes a polynomial of degree $m+s-1$. This polynomial is uniquely determined by the $m$ moment and $s$ sonic boom conditions. Since $F$ is assumed to be even, \eqref{eq:F_poly} defines $F(\kappa)$ for negative $\kappa$ as well.

The non-zero part of $F$, given by $Q_{m+s-1}$, is displayed for the cases $m=s=q$, $q = 2, 6, 14$ in Figure \ref{fig:F}. Recall that $q$ moment conditions imply that $q-1$ derivatives of $F$ are zero at $kh=0$ while $q$ sonic boom conditions imply that $q-1$ derivatives of $F$ are zero at $kh=k_*h$.
 \begin{figure}[h]
 	\centering
 	\includegraphics[width=0.7\linewidth]{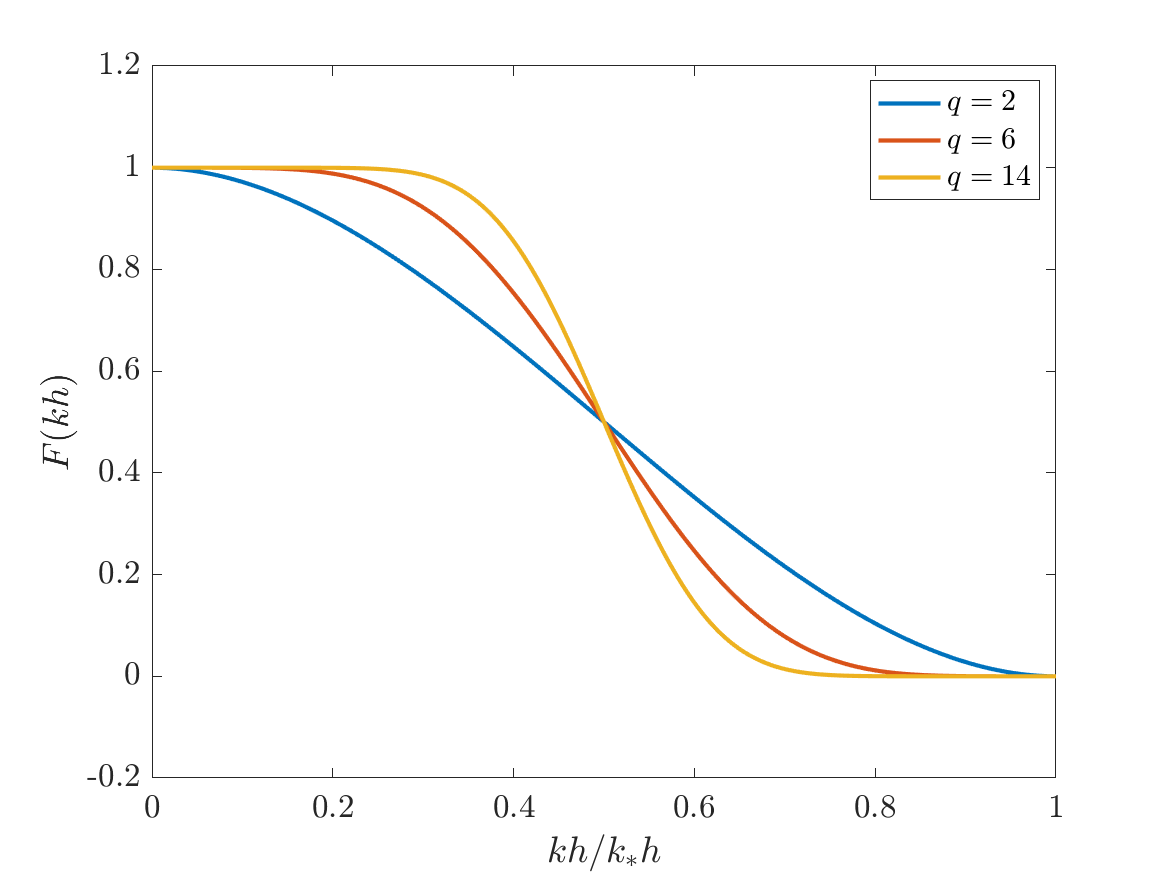}
 	\caption{Non-zero part of $F(kh)$ for the case $m=s=q$.}
 	\label{fig:F}
 \end{figure}

\subsection{Convergence result for the advection equation}
The errors in the Fourier coefficients are
\begin{equation}
	\fcoeff{\bferr}_k = \fcoeff{\bfu}_k - \fcoeff{u}_k, \quad k \in K_N.
\end{equation}
We will first bound the Fourier coefficients of the error, and then use that result to bound the error itself. The proofs are quite similiar to the proofs in Petersson et al.\ \cite{Petersson2016}, with some extensions to account for a non-zero source velocity.
\begin{theorem} \label{thm:fourier}
Let $g$ be $p+2$ times continuously differentiable and let $g(t)=0$ for $t \leq 0$ and $t \geq t_1$. If $F$ satisfies $p$ moment conditions and $p+1$ smoothness conditions, then, for any $t \geq t_1$,
\begin{equation}
	\fcoeff{\bferr}_0 = 0
\end{equation}
and
\begin{equation}
	\abs{\fcoeff{\bferr}_k} \leq C \frac{h^p}{\abs{k}}, \quad k \in K_N \setminus \{0\},
\end{equation}
for some constant $C$.
\end{theorem}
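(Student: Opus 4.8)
The plan is to bound each Fourier coefficient of the error $\fcoeff{\bferr}_k = \fcoeff{\bfu}_k - \fcoeff{u}_k$ by separately estimating the exact coefficient $\fcoeff{u}_k$ from \eqref{eq:cont_ibp_n} and the discrete coefficient $\fcoeff{\bfu}_k$ from \eqref{eq:fcoeff_discr_ibp}, and then also controlling their difference. First I would dispose of the $k=0$ case: setting $k=0$ in \eqref{eq:solution_cont} and \eqref{eq:fcoeff_discr_2} gives $\fcoeff{u}_0(t) = \fcoeff{\bfu}_0(t) = \frac{1}{L}\int_0^{t_1} g(\tau)\,\dr\tau$ (using $F(0)=1$ from the first moment condition and $\phat(0)=1$), so $\fcoeff{\bferr}_0 = 0$.

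For $k \neq 0$, the strategy is to split the range $K_N\setminus\{0\}$ according to where $k$ sits relative to the sonic boom wavenumber $k_*$. In the ``safe'' range where $|c\phat - v_0|$ is bounded away from zero — say $|k| \le k_* h / (2h)$ or more precisely on any fixed interval of $kh$ bounded away from $k_*h$ — I would apply integration by parts $n=p+1$ times in both \eqref{eq:cont_ibp_n} and \eqref{eq:fcoeff_discr_ibp}, using that $g$ is $p+2$ times continuously differentiable so $g^{(p+1)}$ is at least continuous and absolutely integrable. This yields $|\fcoeff{u}_k| \le C/|k|^{p+1}$ directly, so that contribution to the error is already $O(h^p/|k|)$ once one notes $1/|k|^{p+1} \le h^p/|k|$ is false in general — so instead I would keep $|\fcoeff{u}_k| \le C|k|^{-(p+1)}$ and argue that for $|k|\ge 1/h$ this is $\le C h^{p+1}|k|^{-1} h^{-(p+1)}\cdot h^{p+1}$... more carefully: the exact tail must be folded into the discrete estimate. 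The cleaner route, following Petersson et al., is to write $\fcoeff{\bferr}_k = \fcoeff{\bfu}_k - \fcoeff{u}_k$ and estimate the difference of the two integrands directly before integrating by parts, so that the leading terms cancel and what remains carries an explicit factor $c\phat - c = \ordo((kh)^p)$ from \eqref{eq:phat_ordo}, which supplies the $h^p$. Combining this with one net integration by parts to extract the $1/|k|$ gives $|\fcoeff{\bferr}_k| \le C h^p/|k|$ in the safe range.

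Near the sonic boom wavenumber, the denominators $(c\phat - v_0)^n$ in \eqref{eq:fcoeff_discr_ibp} blow up, so integration by parts cannot be pushed that far; here Lemma \ref{lemma:symbol_at_boom} is the key tool. With $p+1$ sonic boom conditions imposed, the lemma gives $|F(kh)|/|c\phat - v_0|^{p+1} \le C$ for $|k| \le k_*$, which exactly compensates the worst-case growth after $p+1$ integrations by parts; meanwhile the exact solution $\fcoeff{u}_k$ has no such denominator (it involves $c - v_0$, a fixed nonzero constant), so it is harmlessly small there, and for $k > k_*$ the source has $F(kh)=0$ so $\fcoeff{\bfu}_k = 0$ and only the rapidly decaying $\fcoeff{u}_k$ survives. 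Patching the safe-range bound with the boom-range bound (taking the larger constant, and checking the two overlap on a common interval of $kh$) yields the stated estimate for all $k \in K_N\setminus\{0\}$.

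The main obstacle I anticipate is the bookkeeping in the intermediate region: one must choose the number of integrations by parts and the splitting of the difference-of-integrands so that (i) the factor $\phat - 1 = \ordo((kh)^p)$ is genuinely extracted to give $h^p$ uniformly, (ii) exactly one surplus power of $|k|$ remains in the denominator, and (iii) the estimate near $k_*$ uses the sonic boom conditions (hence $F$, which only appears in the discrete coefficient) rather than smoothness of $g$, since $(c\phat - v_0)^{-1}$ is unbounded there. Reconciling the index ``$p$ moment conditions and $p+1$ smoothness conditions'' in the hypothesis with the ``$p+1$ sonic boom conditions'' needed for Lemma \ref{lemma:symbol_at_boom} — and confirming that $p+2$ derivatives of $g$ is precisely what $p+1$ integrations by parts plus an absolutely integrable remainder demands — is the delicate part; everything else is routine estimation using \eqref{eq:fcoeff_decay}-style arguments.
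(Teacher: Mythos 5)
Your overall architecture is the same as the paper's: the $k=0$ case by direct comparison using $F(0)=1$ and $\phat(0)=1$; a low-wavenumber regime handled by comparing the exact and discrete integral representations so that the leading parts cancel and the error inherits a factor $\ordo((kh)^p)$; and the regime near and beyond $k_*$ handled by bounding $\fcoeff{u}_k$ and $\fcoeff{\bfu}_k$ separately, with Lemma \ref{lemma:symbol_at_boom} (under $p+1$ sonic boom conditions, which is indeed how the theorem's ``smoothness conditions'' are used) absorbing the degenerate denominator and $F(kh)=0$ killing $\fcoeff{\bfu}_k$ for $\abs{k}>k_*$. The paper splits instead at a small constant $\kappa_0$ coming from the $\ordo$ estimates and treats all of $\abs{kh}>\kappa_0$ by separate bounds, but your split by distance to $k_*h$ can be patched to the same effect, as you yourself note.

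The genuine gap is in the quantitative bookkeeping of the comparison regime, exactly the part you flag as delicate. After the cancellation, two terms survive: one proportional to $\abs{F(kh)-1}\,\abs{\fcoeff{u}_k}$, which is controlled by the $p$ moment conditions (your sketch never actually invokes the moment conditions for $k\neq 0$, only at $k=0$), and one controlled by $\abs{e^{ikc(\phat-1)(\tau-t)}-1}\leq \abs{kc(\phat-1)(\tau-t)} = \abs{k}\,\ordo((kh)^p)$. In the second term the $h^p$ arrives as $\abs{kh}^p=\abs{k}^p h^p$ together with one \emph{extra} power of $\abs{k}$, so to land at $C h^p/\abs{k}$ you need a factor $\abs{k}^{-(p+2)}$ from integration by parts: the paper integrates by parts $n=p+2$ times in this term, and this---not ``an absolutely integrable remainder after $p+1$ integrations by parts''---is where the hypothesis $g\in C^{p+2}$ is spent ($n=p+1$ suffices only for the moment-condition term and for the separate high-wavenumber bounds). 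Your plan of ``one net integration by parts'' after extracting the $\ordo((kh)^p)$ factor would leave a bound of size $h^p\abs{k}^p$, which does not prove the theorem. Once you fix the count to $p+2$ and add the explicit moment-condition term, the rest of your outline goes through essentially as in the paper.
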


\begin{proof} We organize the proof into three cases: $k=0$, $\frac{2 \pi h}{L} \leq \abs{kh} \leq \kappa_0$, and $\abs{kh} > \kappa_0$. The constant $\kappa_0$ is not known at this point but will be specified under Case 2 below. We will frequently use $C$ to denote generic constants that are independent of $k$ and $h$.
\\ \\
\noindent \textbf{Case 1:} $k=0$. \\
Compare the exact solution \eqref{eq:solution_cont} with the discrete solution \eqref{eq:fcoeff_discr_2} and use that $F(0)=1$ and $\phat(0)=1$. It follows that $\fcoeff{\bferr}_0 = 0$.
\\ \\
\noindent \textbf{Case 2:} $\frac{2 \pi h}{L} \leq \abs{kh} \leq \kappa_0$ \\
Note that the error can be expressed as follows:
\begin{equation}
\begin{aligned}
	\abs{\bferr_k} &= \abs{\fcoeff{\bfu}_k - \fcoeff{u}_k} = \abs{\fcoeff{\bfu}_k - F(kh)\fcoeff{u}_k + (F(kh)-1)\fcoeff{u}_k} \\
	&\leq \underbrace{\abs{\fcoeff{\bfu}_k - F(kh)\fcoeff{u}_k}}_{T_1} + \underbrace{\abs{F(kh)-1} \abs{\fcoeff{u}_k} }_{T_2} \\
\end{aligned}
\end{equation}
Let us first bound $T_2$. By \eqref{eq:fcoeff_decay}, we have
\begin{equation}
	\abs{\fcoeff{u}_{k}} \leq \frac{C}{\abs{k}^n}. 
\end{equation}
By the assumption of $p$ moment conditions, we have
\begin{equation}
	\abs{F(kh)-1} = \ordo((kh)^p).
\end{equation}
It follows that
\begin{equation}
	T_2 = \abs{F(kh)-1} \abs{\fcoeff{u}_k} \leq \ordo((kh)^p) \frac{C}{\abs{k}^n}  = \frac{1}{\abs{k}^{n}}  \ordo((kh)^p).
\end{equation}
Because $g$ is $p+2$ times continuously differentiable we may set $n=p+1$, which yields 
\begin{equation}
	T_2 = \frac{1}{\abs{k}^{p+1}} \ordo((kh)^p).
\end{equation}

For $T_1$, we have
\begin{equation} \label{eq:T1_complicated}
\begin{aligned}
	T_1 &= \abs{\fcoeff{\bfu}_k - F(kh)\fcoeff{u}_k} \\
	&=\abs{\frac{F(kh)(-1)^n}{L(ik)^n} \myint{0}{t}{ g^{(n)}(\tau) \left(\frac{e^{ik((c\phat-v_0)\tau -c\phat t)} }{(c \phat - v_0)^n} - \frac{e^{ik((c-v_0)\tau - ct)}}{(c-v_0)^n} \right)}{\tau}} \\
	&\leq C \frac{\abs{F(kh)}}{\abs{k}^n} \myint{0}{t}{ \abs{g^{(n)}(\tau)} \abs{\frac{e^{ik((c\phat-v_0)\tau -c\phat t)} }{(c \phat - v_0)^n} - \frac{e^{ik((c-v_0)\tau - ct)}}{(c-v_0)^n} }}{\tau} \\
	&\leq \frac{C}{\abs{k}^n} \myint{0}{t}{ \abs{\frac{e^{ik((c\phat-v_0)\tau -c\phat t)} }{(c \phat - v_0)^n} - \frac{e^{ik((c-v_0)\tau - ct)}}{(c-v_0)^n} }}{\tau} \\
	&\leq \frac{C}{\abs{k}^n} \myint{0}{t}{ \abs{ \frac{e^{ik((c-v_0)\tau - ct)}}{(c-v_0)^n} } \abs{\frac{e^{ikc(\phat-1)(\tau-t)} (c-v_0)^n }{(c \phat - v_0)^n} - 1 }}{\tau} \\
	&\leq \frac{C}{\abs{k}^n} \myint{0}{t}{ \abs{\frac{e^{ikc(\phat-1)(\tau-t)} (c-v_0)^n }{(c \phat - v_0)^n} - 1 }}{\tau}.
\end{aligned}
\end{equation}
Next, we need to utilize that $\phat \simeq 1$. Note that
\begin{equation}
\begin{aligned}
	\frac{(c-v_0)^n }{(c \phat - v_0)^n} &= \frac{(c-v_0)^n }{(c - v_0 + c(\phat-1))^n} = \frac{1 }{\left(1 + \frac{c}{c-v_0}(\phat-1)\right)^n} = \frac{1}{(1+z)^n},
\end{aligned}
\end{equation}
where we have defined
\begin{equation}
	z := \frac{c}{c-v_0}(\phat-1) = \ordo((kh)^p).
\end{equation}
Taylor expanding yields
\begin{equation}
	\frac{1}{(1+z)^n} = 1 - nz + \frac{n(n+1)}{2} z^2 + ... = 1 + \ordo(z) = 1 + \ordo((kh)^p),
\end{equation}
where we used that $n$ does not depend on $k$ or $h$. We conclude that
\begin{equation} \label{eq:fraction_simplified}
	\frac{(c-v_0)^n }{(c \phat - v_0)^n} = 1 + \ordo((kh)^p).
\end{equation}
Substituting \eqref{eq:fraction_simplified} into \eqref{eq:T1_complicated} yields
\begin{equation} \label{eq:pre_beta}
\begin{aligned}
	T_1 &\leq \frac{C}{\abs{k}^n} \myint{0}{t}{ \abs{e^{ikc(\phat-1)(\tau-t)} \left( 1 + \ordo((kh)^p) \right)  - 1 }}{\tau} \\
	&\leq \frac{C}{\abs{k}^n} \myint{0}{t}{ \left( \abs{e^{ikc(\phat-1)(\tau-t)} - 1} + \ordo((kh)^p) \right)}{\tau}.
\end{aligned}
\end{equation}
Noting that
\begin{equation}
	\abs{e^{i\beta} - 1} \leq \abs{\beta}
\end{equation}
for any real $\beta$, we obtain
\begin{equation} \label{eq:beta_simpl}
	\abs{e^{ikc (\phat-1)(\tau-t)}-1} \leq \abs{kc (\phat-1)(\tau-t)} = \abs{k} \ordo((kh)^p),
\end{equation}
where the estimate $\tau-t = \ordo(1)$ is valid because we consider the final time fixed. Using \eqref{eq:beta_simpl} in \eqref{eq:pre_beta} leads to
\begin{equation}
\begin{aligned}
	T_1 &\leq \frac{C}{\abs{k}^n} (\abs{k}\ordo((kh)^p) + \ordo((kh)^p)) =  \frac{1 + \abs{k}^{-1}}{\abs{k}^{n-1}} \ordo((kh)^p) .
\end{aligned}
\end{equation}
Because we are studying wavenumbers $\abs{k} \geq 2 \pi/L$, we have $\abs{k}^{-1} \leq L/2\pi = C$. It follows that
\begin{equation}
	T_1 =  \frac{1}{\abs{k}^{n-1}} \ordo((kh)^p).
\end{equation}
Since $g$ is $p+2$ times continuously differentiable we may set $n=p+2$, which yields 
\begin{equation}
	T_1 =  \frac{1}{\abs{k}^{p+1}} \ordo((kh)^p).
\end{equation}
Adding $T_1$ and $T_2$ leads to the error estimate
\begin{equation}
	\abs{\fcoeff{\bferr}_k} \leq T_1 + T_2 = \frac{1}{\abs{k}^{p+1}} \ordo((kh)^p) + \frac{1}{\abs{k}^{p+1}} \ordo((kh)^p) = \frac{1}{\abs{k}^{p+1}} \ordo((kh)^p).
\end{equation}
It follows that there exist constants $C$ and $\kappa_0$ such that, for $\abs{kh} \leq \kappa_0$,
\begin{equation}
	\abs{\fcoeff{\bferr}_k} \leq \frac{C}{\abs{k}^{p+1}} \abs{kh}^p = C\frac{h^p}{\abs{k}}.
\end{equation}
\\ \\
\noindent \textbf{Case 3:} $\abs{kh} > \kappa_0$ \\
In this case we prove that the error is small by proving that both $\fcoeff{\bfu}_k$ and $\fcoeff{u}_k$ are small. 
By \eqref{eq:fcoeff_decay}, we have
\begin{equation}
	\abs{\fcoeff{u}_{k}} \leq \frac{C}{\abs{k}^n} = C \frac{h^p}{\abs{kh}^p \abs{k}^{n-p}} \leq C \frac{h^p}{\abs{k}^{n-p}}.
\end{equation}
Setting $n=p+1$ yields the desired estimate
\begin{equation}
	\abs{\fcoeff{u}_{k}} \leq C \frac{h^p}{\abs{k}}.
\end{equation}

Now consider $\fcoeff{\bfu}_k$. For $\abs{k} \geq k_*$, we have $F(kh)=0$ and hence $\fcoeff{\bfu}_k=0$ according to the solution formula \eqref{eq:fcoeff_discr_2}. For $\abs{k} < k_*$, \eqref{eq:fcoeff_discr_ibp} states that
\begin{equation}
	\fcoeff{\bfu}_k  = F(kh) \frac{(-1)^n}{L(ik(c\phat-v_0))^n} \myint{0}{t}{ g^{(n)}(\tau) e^{ik((c\phat-v_0)\tau -c \phat t)} }{\tau}.
\end{equation}
Taking the absolute value yields
\begin{equation}
\begin{aligned}
	\abs{\fcoeff{\bfu}_k} &\leq \frac{\abs{F(kh) } }{L\abs{k}^n\abs{c\phat-v_0}^n} \myint{0}{t}{ \abs{g^{(n)}(\tau)}  }{\tau} \leq \frac{C \abs{F(kh) } }{\abs{k}^n \abs{c\phat-v_0}^n} \\
	&= \frac{C h^{n-1}}{\abs{kh}^{n-1} \abs{k}} \frac{\abs{F(kh)}}{\abs{c\phat-v_0}^n} \leq \frac{C h^{n-1}}{\abs{k}} \frac{\abs{F(kh)}}{\abs{c\phat-v_0}^n}
\end{aligned}
\end{equation}
Setting $n=p+1$ yields
\begin{equation}
	\abs{\fcoeff{\bfu}_k}  \leq C \frac{h^p}{\abs{k}} \frac{\abs{F(kh) } }{\abs{c\phat-v_0}^{p+1}}.
\end{equation}
Because $F$ satisfies $p+1$ sonic boom conditions, we may apply Lemma \ref{lemma:symbol_at_boom} to obtain the estimate
\begin{equation}
	\abs{\fcoeff{\bfu}_k}  \leq C \frac{h^p}{\abs{k}}.
\end{equation}
The error satisfies
\begin{equation}
	\fcoeff{\bferr}_k = \abs{\fcoeff{\bfu}_k - \fcoeff{u}_k} \leq \abs{\fcoeff{\bfu}_k} + \abs{\fcoeff{u}_k} \leq C \frac{h^p}{\abs{k}} + C \frac{h^p}{\abs{k}} \leq C \frac{h^p}{\abs{k}}.
\end{equation}

\end{proof}

\begin{remark}
Recall that the sonic boom conditions transition to the smoothness conditions of Petersson et al.\ if $v_0=0$.  Petersson et al.\ showed that it is possible to derive an almost equally strong bound on $\abs{\fcoeff{\bfu}_k}$ with only $p$ smoothness conditions, at the cost of slightly more involved analysis. In theory we could have taken a similar approach, but with the motion-consistent source discretization there is no obvious drawback of requiring more sonic boom conditions. Hence, for simplicity, we here require $p+1$ conditions.

\end{remark}

\begin{theorem} \label{thm:physical}
Under the same assumptions as in Theorem \ref{thm:fourier}, the error in the physical domain,
\begin{equation}
	\bferr_j = \bfu_j - u|_{x=x_j},
\end{equation}
satisfies
\begin{equation}
	\norm{\bferr}_h \leq Ch^{p},
\end{equation}
for some constant $C$.
\end{theorem}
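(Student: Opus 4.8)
The plan is to convert the Fourier-coefficient bound of Theorem~\ref{thm:fourier} into a bound on the grid norm $\norm{\bferr}_h$, being careful that the exact solution sampled on the grid carries Fourier content outside the resolved set $K_N$. The discrete solution lies in the span of the grid modes, $\bfu_j=\sum_{k\in K_N}\fcoeff{\bfu}_k e^{ikx_j}$, whereas the sampled exact solution (smooth for $t\ge t_1$, so its Fourier series converges pointwise) has the full series $u|_{x=x_j}=\sum_{k\in K_\infty}\fcoeff{u}_k e^{ikx_j}$. Splitting the latter at $K_N$ gives
\[
	\bferr_j = \sum_{k\in K_N}\fcoeff{\bferr}_k e^{ikx_j} \;-\; \sum_{k\in K_\infty\setminus K_N}\fcoeff{u}_k e^{ikx_j},
\]
so by the triangle inequality $\norm{\bferr}_h$ is bounded by a \emph{resolved} contribution and an \emph{aliasing tail}.

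For the resolved contribution I would use that $\{e^{ikx}\}_{k\in K_N}$ is orthonormal in $\ip{\cdot}{\cdot}{h}$ (discrete Parseval), so its grid norm squared equals $\sum_{k\in K_N}\abs{\fcoeff{\bferr}_k}^2$. By Theorem~\ref{thm:fourier}, $\fcoeff{\bferr}_0=0$ and $\abs{\fcoeff{\bferr}_k}\le Ch^p/\abs{k}$ for $k\neq0$, hence
\[
	\sum_{k\in K_N}\abs{\fcoeff{\bferr}_k}^2 \;\le\; C^2 h^{2p}\!\!\sum_{k\in K_N\setminus\{0\}}\!\!\frac{1}{\abs{k}^2} \;\le\; C^2 h^{2p}\,\frac{L^2}{2\pi^2}\sum_{j=1}^{\infty}\frac{1}{j^2},
\]
where the last series is a finite constant independent of $h$. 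This is precisely where the extra factor $\abs{k}^{-1}$ in Theorem~\ref{thm:fourier} pays off: without it the sum over $K_N$ would grow like $N\sim h^{-1}$. The resolved contribution is therefore $\ordo(h^p)$.

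For the aliasing tail I would again use the triangle inequality together with $\norm{e^{ikx}}_h=1$ for every wavenumber $k$ (which follows directly from $Mh=L$), so that the tail is bounded by $\sum_{k\in K_\infty\setminus K_N}\abs{\fcoeff{u}_k}$. Applying the decay estimate \eqref{eq:fcoeff_decay} with $n=p+1$ (permissible since $g$ is $p+2$ times continuously differentiable) and summing over $\abs{k}\ge(N+1)\tfrac{2\pi}{L}$ gives a bound of order $N^{-(n-1)}=N^{-p}$; since $N=\tfrac{1}{2}(L/h-1)$, this is $\ordo(h^p)$. Adding the two contributions yields $\norm{\bferr}_h\le Ch^p$.

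The only non-routine step — and the one I would be most careful about — is the decomposition above: one must not overlook the aliasing tail, that is, the fact that $u|_{x=x_j}$ is a grid function with nonzero components on all of $K_\infty$, so $\fcoeff{\bferr}_k$ is \emph{not} the $k$-th discrete Fourier coefficient of the grid error $\bferr$. Once this is handled, the remainder reduces to summing two explicitly convergent series, closely mirroring the corresponding argument in Petersson et al.\ \cite{Petersson2016}.
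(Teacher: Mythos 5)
Your proposal is correct and follows essentially the same route as the paper: the same splitting of $\bferr_j$ into the resolved part $\sum_{k\in K_N}\fcoeff{\bferr}_k e^{ikx_j}$ and the unresolved tail $\sum_{k\in K_\infty\setminus K_N}\fcoeff{u}_k e^{ikx_j}$, discrete Parseval plus the $\abs{\fcoeff{\bferr}_k}\leq Ch^p/\abs{k}$ bound for the resolved part, and the decay estimate \eqref{eq:fcoeff_decay} with $n=p+1$ for the tail. The only (minor) deviation is that you bound the tail via the triangle inequality and an $\ell^1$ sum of coefficients rather than the paper's Plancherel identity; this costs nothing (still $\ordo(N^{-p})=\ordo(h^p)$) and has the small virtue of not relying on orthogonality between resolved and aliased modes in the discrete inner product.
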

\begin{proof}

The error at grid point $x_j$ is
\begin{equation}
\begin{aligned}
	\bferr_j &= \bfu_j - u|_{x=x_j} = \sum \limits_{k \in K_N}^{} \fcoeff{\bfu}_k e^{i k x_j} - \sum \limits_{k \in K_{\infty}} \fcoeff{u}_{k} e^{ik x_j} \\
	&= \sum \limits_{k \in K_N} (\fcoeff{\bfu}_k - \fcoeff{u}_k)e^{i k x_j} - \sum \limits_{k \in K_{\infty} \setminus K_{N} } \fcoeff{u}_{k} e^{ik x_j} \\
	&= \sum \limits_{k \in K_N} \fcoeff{\bferr}_ke^{i k x_j} - \sum \limits_{k \in K_{\infty} \setminus K_{N} } \fcoeff{u}_{k} e^{ik x_j} .
\end{aligned}
\end{equation}
By the Plancherel theorem,
\begin{equation} \label{eq:plancherel}
\ip{\bferr}{\bferr}{h} = \sum \limits_{k \in K_N} \abs{\fcoeff{\bferr}_k}^2 + \sum \limits_{k \in K_{\infty} \setminus K_{N} } \abs{\fcoeff{u}_k}^2.
\end{equation}
Using the bound $\abs{\fcoeff{u}_k} \leq C \abs{k}^{-(p+1)}$, which follows from \eqref{eq:fcoeff_decay} with $n=p+1$, the second sum in \eqref{eq:plancherel} can be bounded as
\begin{equation}
\begin{aligned}
	\sum \limits_{k \in K_{\infty} \setminus K_{N} } \abs{\fcoeff{u}_k}^2 &\leq C \sum \limits_{k \in K_{\infty} \setminus K_{N} } \frac{1}{\abs{k}^{2p+2}} = Ch^{2p} \sum \limits_{k \in K_{\infty} \setminus K_{N} } \frac{1}{\abs{k}^{2} \abs{kh}^{2p}} \\
	&\leq Ch^{2p} \sum \limits_{k \in K_{\infty} \setminus K_{N} } \frac{1}{\abs{k}^{2} } =  Ch^{2p} \sum \limits_{\abs{m} = N+1}^{\infty} \frac{1}{\abs{\frac{2\pi m}{L}}^{2} } \\
	&= C h^{2p} \sum \limits_{\abs{m} = N+1}^{\infty} \frac{1}{m^2 } \leq C h^{2p},
\end{aligned}
\end{equation}
where we used that the series converges in the last step. The first sum in \eqref{eq:plancherel} satisfies
\begin{equation}
\begin{aligned}
	\sum \limits_{k \in K_N} \abs{\fcoeff{\bferr}_k}^2 &\leq \sum \limits_{k \in K_N \setminus \{0 \} } \left( C \frac{h^p}{\abs{k}} \right)^2 = Ch^{2p} \sum \limits_{\abs{m} = 1}^{N} \frac{1}{\abs{\frac{2\pi m}{L}}^{2}} \\
	&= Ch^{2p} \sum \limits_{\abs{m} = 1}^{N} \frac{1}{m^2}
	\leq Ch^{2p}.
\end{aligned}
\end{equation}
We conclude that
\begin{equation} \label{eq:err_bound_squared}
	\norm{\bferr}^2_h = \ip{\bferr}{\bferr}{h} \leq Ch^{2p} + Ch^{2p} = Ch^{2p},
\end{equation}
and the final result follows after taking the square root of \eqref{eq:err_bound_squared}.
\end{proof}

\section{Windowing the source}\label{sec:window}
For a source discretization to be applicable in a finite domain, it must have compact support in physical space. While the motion-consistent source discretization formally has global support, the magnitude of the source coefficients $\bfd^{x_0}_j$ decays rapidly as the distance $\abs{x_j-x_0}$ increases, see Figure \ref{fig:compoldandnewsources}. We will show numerically that we can window the source to a finite width, which decreases with grid refinement, without reducing the order of accuracy. The procedure is similar to the truncation of the source in \cite{Zahedi2010}.

To analyze the decay rate of $\bfd^{x_0}_j$ we introduce the Fourier interpolant,
\begin{equation} \label{eq:def_d}
	d^{x_0}(x) = \sum \limits_{k \in K_N} \fcoeff{\bfd}^{x_0}_k e^{ikx} = \frac{1}{L} \sum \limits_{k \in K_{N}} F(kh) e^{ik(x-x_0)},
\end{equation}
which satisfies $d^{x_0}(x_j) = \bfd^{x_0}_j$.
Since $F(\kappa)$ is zero for $\abs{\kappa}\geq \pi$, we may extend the sum in \eqref{eq:def_d} to infinity:
\begin{equation}
d^{x_0}(x) =  \frac{1}{L} \sum \limits_{k \in K_{\infty}} F(kh) e^{ik(x-x_0)} .
\end{equation}
To provide some intuition for the decay rate of $d^{x_0}$, let us consider the (non-periodic) function
\begin{equation}
	D^{x_{0}}(x) = \frac{1}{2\pi L} \myint{-\infty}{\infty}{F(kh)e^{ik(x-x_0)}}{k}.
\end{equation}
The Fourier \emph{transform} of $D^{x_{0}}$ is
\begin{equation}
	\mathcal{F}[D^{x_{0}}](k) = \frac{e^{-ikx_0}}{L} F(kh).
\end{equation}
Notice that $\fcoeff{d}^{x_0}_k=\mathcal{F}[D^{x_{0}}](k)$, which leads us to expect $d^{x_0} \approx D^{x_0}$.
If $F$ satisfies $q$ moment conditions and $q$ sonic boom conditions (i.e., $m=s=q$), applying the integration-by-parts formula $q$ times yields
\begin{equation}
	D^{x_{0}}(x) = \frac{1}{2\pi L} \left(\frac{-h}{i(x-x_0)}\right)^q \myint{-\infty}{\infty}{F^{(q)}(kh)e^{ik(x-x_0)}}{k}.
\end{equation}
It follows that
\begin{equation}
\begin{aligned}
\abs{D^{x_{0}}(x)} &\leq \frac{1}{2\pi L} \frac{h^q}{\abs{x-x_0}^q} \myint{-\pi/h}{\pi/h}{\abs{F^{(q)}(kh)e^{ik(x-x_0)}}}{k} \\
&\leq \frac{1}{2\pi L} \frac{h^q}{\abs{x-x_0}^q} \frac{2\pi}{h} C = C \frac{h^{q-1}}{\abs{x-x_0}^q},
\end{aligned}
\end{equation}
where we used that $F^{(q)}(\kappa) = 0$ for $\abs{\kappa} \geq \pi$ and $F^{(q)}$ is assumed to be bounded. Assuming that the same decay rate holds for $d^{x_0}$ leads to the conjecture
\begin{equation}
\label{eq:assumedRelationd}
\abs{d^{x_{0}}(x)} \leq   C \frac{h^{q-1}}{\abs{x-x_0}^q},
\end{equation}
for some constant $C$. We will verify that \eqref{eq:assumedRelationd} holds numerically.

Let $W$ denote the rectangular window of width $2 \ell$:
\begin{equation}
\label{eq:window}
    W(\xi) = \left\{ \begin{array}{cc}
	1, & \abs{\xi} < \ell \\
	0, & \abs{\xi} \geq \ell
	\end{array} \right.
\end{equation}
Assuming that the conjecture \eqref{eq:assumedRelationd} holds, the error introduced by windowing $d^{x_0}$ is
\begin{equation}
	e(x) = \abs{W(x-x_0)d^{x_0}(x) - d^{x_0}(x)} \leq C \frac{h^{q-1}}{\ell^q}.
\end{equation}
Note that the error vector arising from the windowing satisfies
\begin{equation}
	\mathbf{e}_j = \abs{W(x_j-x_0) \bfd^{x_0}_j - \bfd^{x_0}_j} = \abs{W(x_j-x_0) d^{x_0}(x_j) - d^{x_0}(x_j)} = e(x_j).
\end{equation}
Setting $\ell = C_\ell h^w$, where $C_\ell$ is a constant, yields
\begin{equation}
\label{eq:relation:epqw}
	\mathbf{e}_j \leq C_{\ell} h^{q-1-qw}.
\end{equation}
To make $\mathbf{e}_j$ of order $p$, we need
\begin{equation}
	q-1-qw \geq p \Leftrightarrow w \leq \frac{q-1-p}{q}.
\end{equation}
Given $w$ and $p$ this implies the following requirement on $q$:
\begin{equation} \label{eq:qwp}
	q \geq \frac{p + 1}{1-w}.
\end{equation}
Clearly, the ideal choice $w=1$, which yields a source width proportional to $h$, is not possible. As an example, setting $w=1/2$ yields the condition
\begin{equation}
	q \geq 2p + 2.
\end{equation}
Note that a width proportional to $h^w$ implies a number of nonzero coefficients proportional to $h^{w-1}$.

\subsection{Numerical verification}
In this section, the conjectured bound on $\mathbf{e}_j$ in terms of $q$ and $w$, stated in \eqref{eq:relation:epqw}, is verified by numerical experiments. We construct motion-consistent discrete $\delta$ distributions for various parameters and compute the error vector $\mathbf{e}$ by comparing the windowed $\delta$ discretization, $W\bfd^{x_0}$, with the global version, $\bfd^{x_0}$.

We set $L = 1$, and $x_0 = \frac{1}{2}+ \frac{1}{29}$. In all experiments, the constant $C_\ell$ is chosen such that $\ell = 1/2$ when $h = 1/16$. Figure \ref{fig:convCut} shows the $l^2$ norm of $\mathbf{e}$ for $w = 1/4$ and $w = 3/4$, with $k_*h = \pi$.  In all cases, a slightly higher $p$ is observed than the conjecture \eqref{eq:relation:epqw} suggests, which indicates that the conjectured bound is valid, although perhaps not sharp. 

To further verify the conjectured bound, Tables \ref{tab:convcutFpi} and \ref{tab:convcut075Fpi} show the observed convergence rate $p$ in numerical experiments with various choices of $w$, for $ k_*h= \pi$ and $ k_*h= \frac{3}{4}\pi$. The observed $p$ is computed by taking the average of 
\begin{equation}
 p_{h} = \log_2 \left(\frac{e_{2h}}{e_{h}} \right),
 \end{equation}
for $h \in \left\{2^{-4}, \ 2^{-5}, \ 2^{-6}, \ \dots, \ 2^{-10}\right\}$, where $e_{h}$ denotes the $l^2$ norm of the error vector obtained with grid spacing $h$. Errors smaller than $10^{-9}$ were excluded to avoid values dominated by floating point errors. The observed rates further indicate that the conjectured bound is valid (but perhaps not sharp).

\begin{figure}[hbt]
   \subfigure[$w = 1/4$]{\label{subfig:cutw025}
    \includegraphics[width=0.475\textwidth]{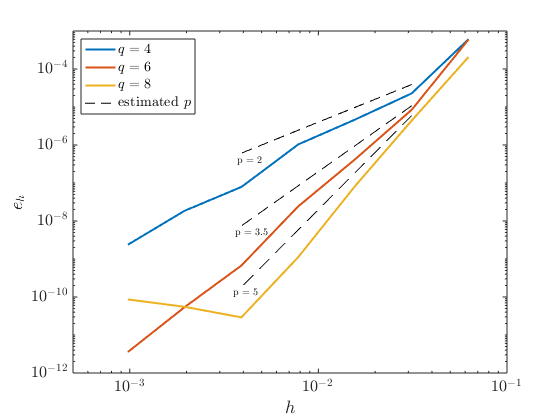}}
  \subfigure[$w = 3/4$]{\label{subfig:cutw075}
    \includegraphics[width=0.475\textwidth]{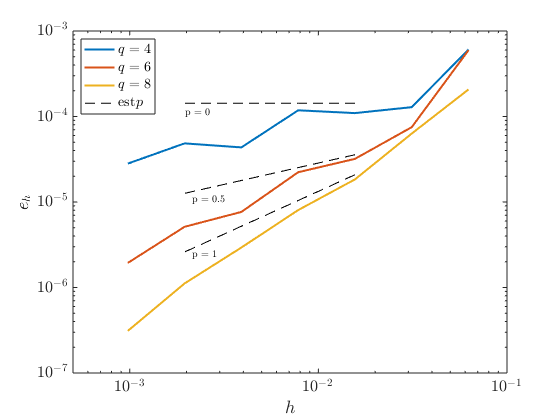}}
  \caption{Numerical verification of the conjectured relation between $p$, $q$ and $e$. Dashed lines indicate the conjectured rates, while solid lines show the experimentally observed errors.}
  \label{fig:convCut}
\end{figure}

\begin{table}[H]
    \center
	\begin{tabular}{c|lllll}
		\diagbox{$q$}{$w$} & $\frac{1}{4}$& $\frac{1}{3}$ & $\frac{1}{2}$ & $\frac{2}{3}$ & $\frac{3}{4}$ \\
		\hline
		4 & 2.9 (2.0) & 2.6 (1.7) & 1.9 (1.0) & 1.1 (0.3)  & 0.7 (0.0)\\
		6 & 4.9 (3.5) & 4.4 (3.0) & 3.0 (2.0) & 1.9 (1.0)  & 1.4 (0.5)\\
		8 & 5.8 (5.0) & 5.1 (4.3) & 3.9 (3.0) & 2.3 (1.7) & 1.6 (1.0)\\
	\end{tabular}
	\caption{Observed and conjectured (in parentheses) $p$ with $k_*h = \pi$}
	\label{tab:convcutFpi}
\end{table}

\begin{table}[H]
	\center
	\begin{tabular}{c|lllll}
		\diagbox{$q$}{$w$} & $\frac{1}{4}$& $\frac{1}{3}$ & $\frac{1}{2}$ & $\frac{2}{3}$ & $\frac{3}{4}$ \\
		\hline
		4 & 2.9 (2.0) & 2.6 (1.7) & 1.8 (1.0) & 1.0 (0.3) & 0.7 (0.0)\\
		6 & 4.6 (3.5) & 4.0 (3.0) & 2.9 (2.0) & 1.8 (1.0) & 1.2 (0.5)\\
		8 & 6.3 (5.0) & 5.5 (4.3) & 4.0 (3.0) & 2.5 (1.7) & 1.8 (1.0)\\
	\end{tabular}
	\caption{Observed and conjectured (in parentheses) $p$ with $k_*h = 0.75\pi$}
	\label{tab:convcut075Fpi}
\end{table}

\section{Implementation aspects}\label{sec:implementation}
There are some design choices to be made when implementing the motion-consistent source discretization. In this section, we summarize the requirements on the source discretization and present the implementation used in the numerical experiments.

Recall that the motion-consistent discrete $\delta$ distribution is defined by its Fourier coefficients:
\begin{equation}
\fcoeff{\bfd}^{x_0}_k = \frac{e^{-ikx_0}}{L} F(kh),
\end{equation}
where we have chosen to define $F$ as in \eqref{eq:F_poly}. To obtain convergence order $p$, $\bfd^{x_0}$ needs to satisfy $m=p$  moment conditions and $s=p+1$ sonic boom conditions according to Theorem \ref{thm:fourier}. For simplicity, we set $m=s=q$ and require $q \geq p+1$. Additionally, to be able to use a window of width $\ell = C_\ell h^w$ without reducing the convergence rate, we need to satisfy \eqref{eq:qwp}. In our experiments, we opt for $w=1/2$, which corresponds to a source discretization that covers on the order of $\sqrt{N}$ grid points. For $w=1/2$, the requirement \eqref{eq:qwp} becomes $q \geq 2p+2$. To satisfy both the requirements from Theorem \ref{thm:fourier} and the requirement from the windowing, we set
\begin{equation}
	q = \max(p+1, 2p+2) = 2p+2.
\end{equation}
 
 In the case of an accelerating source, we define the sonic boom wavenumber $k_*$ as the smallest positive solution to $c\hat{P}(kh) = v_{max}$, where $v_{max}$ denotes the highest source velocity during the simulation. If $ c\hat{P}(kh) = v_{max}$ does not have a solution in the interval $0 \leq kh \leq \pi$, no sonic boom conditions are required. Using a higher velocity than occurs in the simulation (e.g., $\gamma v_{max}$, $\gamma >1$) to define $k_*$ does not reduce the convergence rate of the numerical method. However, using an excessively high velocity will exclude more wavenumbers than necessary and might therefore affect the accuracy for a given $h$. 

The Fourier interpolant $d^{x_0}$ of $\bfd^{x_0}$ is shown for different combinations of $q$ and $k_*$ in Figure \ref{fig:showmovablesources}. Figure \ref{subfig:diffq} shows $d^{x_0}$ for different $q$, with $k_*h = \pi$. Figure \ref{subfig:diffk} shows $d^{x_0}$ for different $k_*$, with $q = 14$.
\begin{figure}[hbt]
	\subfigure[$k_*h = \pi$]{\label{subfig:diffq}
		\includegraphics[width=0.475\textwidth]{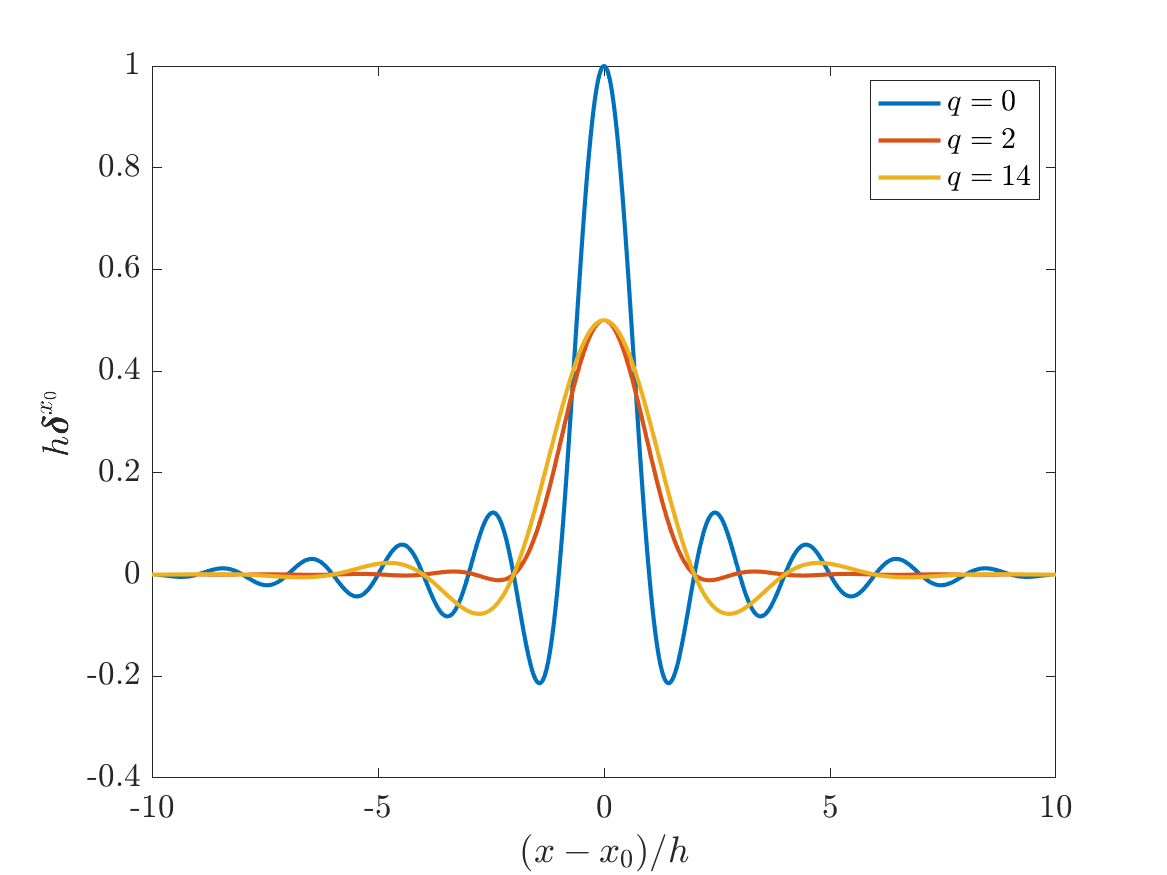}}
	\subfigure[$q = 14$]{\label{subfig:diffk}
		\includegraphics[width=0.475\textwidth]{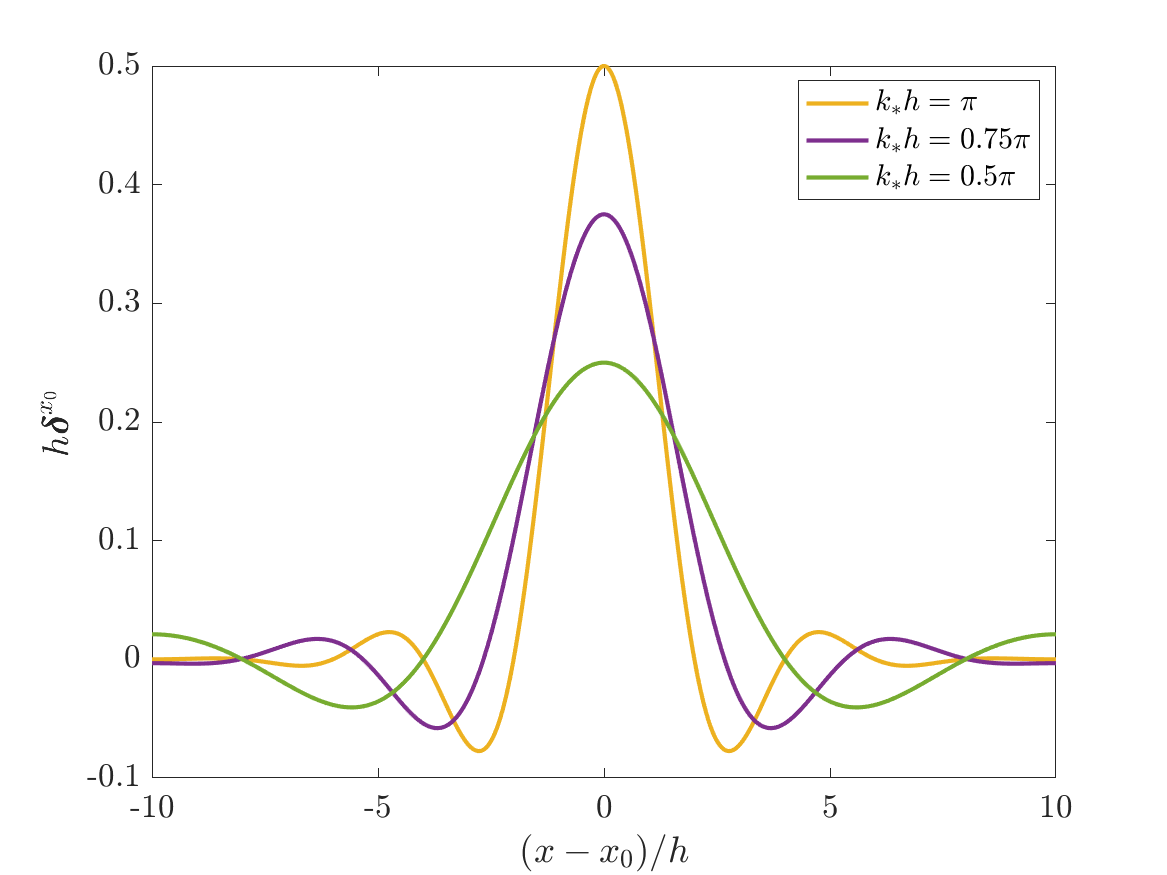}}
	\caption{The motion-consistent discrete $\delta$ distribution for different values of $k_*$ and $q$.}
	\label{fig:showmovablesources}
\end{figure}

Since $\bfd^{x_0}$ is defined in terms of its Fourier coefficients, evaluating it on the grid by naively summing over Fourier modes requires on the order of $N^2$ operations. In our implementation we utilize the fast Fourier transform, which reduces the complexity to $N\log(N)$. When the source is windowed, the source discretization is computed for the whole domain and values outside the window are discarded. In applications where the source is far from boundaries, the user might decide to use a larger window than we have suggested here, to minimize truncation errors from the windowing.

\section{Numerical experiments in 1D} \label{sec:numexp1D}
In this section we verify the convergence rate of the windowed source discretization when solving the advection equation \eqref{eq:ut_ux} with constant source velocity. We choose the source trajectory $x_0(t) = 1 + v_0t$. This problem has the exact solution
\begin{equation}
	\begin{aligned}
		u = \begin{cases}
			\frac{g(\tau)}{c-v_0}, &\tau > 0,\\
			0, & \tau \leq 0.
		\end{cases}
	\end{aligned}
\end{equation}
where
\begin{equation}
\tau = \frac{ct- (x-1)}{c-v_0}.
\end{equation}
We select the parameters $c =1$ and $v_0 = 0.5$. To ensure a smooth solution we use a Gaussian source time function 
 \begin{equation} 
 \label{eq:Gauss1}
 	g(t) = \frac{1}{\sigma \sqrt{2 \pi}}e^{-\frac{(t-t_0)^2}{2\sigma^2}},
 \end{equation} 
 with $t_0 = 1$ and $\sigma = 0.15$. The error is computed at time $t_{end} = 2$. The exact solution at $t_{end}$ and the source path are displayed in Figure \ref{fig:sourceComparison1D}. The test problem is discretized in space on the periodic domain $x \in [0,\ 4]$ with central finite differences of order $p = 2,4,6$ according to \eqref{eq:semidisc_ut_ux}. We use the classical fourth order Runge--Kutta method with time step $\Delta t = 0.2h$ for time integration.
 
 In the experiment, the point source discretion has been truncated by the window in \eqref{eq:window} with $w=1/2$, $q = 2p+2$ and $C_\ell = 4$. Our theoretical results indicate that this combination of $q$ and $w$ yields a discretization of convergence order $p$. The errors and convergence rates are displayed in Table \ref{tab:conv1D}.
 The convergence rate $c_{r}$ is calculated as
 \begin{equation}
 c_{r} = \log \left( \frac{e_{M_2}}{e_{M_1}} \right)\big / \log \left( \frac{M_2}{M_1} \right), 
 \end{equation}
 where $e_M$ is the $l^2$ norm of the error vector obtained with $M$ grid points. 
Table \ref{tab:conv1D} shows that the experimental convergence rates agree well with the theoretical results. 

\begin{table}[htb]
	\center
	\begin{tabular}{r  |c c| c c| c c}
		& \multicolumn{2}{c|}{$2nd$ order} & \multicolumn{2}{c|}{$4th$ order} & \multicolumn{2}{c}{$6th$ order}\\
		\hline
		$M$ & $log_{10}(e_M)$ &$c_{r}$&  $log_{10}(e_M)$ &$c_{r}$&$log_{10}(e_M)$ &$c_{r}$\\
		100  &-0.30& -    & -0.94 & -     & -1.37 & -    \\
		200  &-0.65& 1.19 & -1.93 & 3.32  & -3.01 & 5.50 \\
		400  &-1.25& 1.99 & -3.11 & 3.93  & -4.75 & 5.81 \\
		800  &-1.86& 2.04 & -4.31 & 3.99  & -6.55 & 5.97 \\
		1600 &-2.47& 2.01 & -5.51 & 4.00  & -8.35 & 5.98 \\
	\end{tabular}
	\caption{$l^2$ errors and convergence rates for the 1D advection equation with $v_0 = 0.5$ }
	\label{tab:conv1D}
\end{table}

\begin{figure}[htb]
	\centering
		\includegraphics[width=0.75\textwidth]{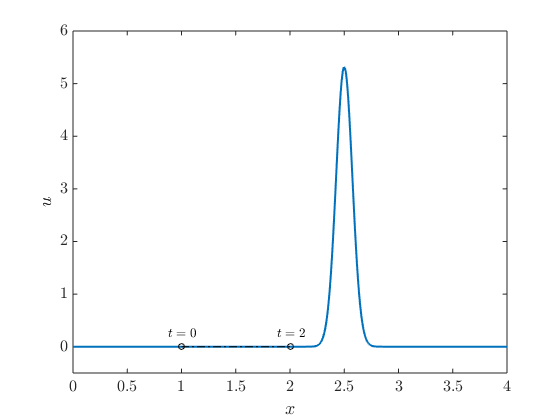}
	\caption[ ]
	{Exact solution and source path for the wave equation in 1D}
	\label{fig:sourceComparison1D}
\end{figure}

\section{Numerical experiments in 2D} \label{sec:numexp2D}
In this section we solve the acoustic wave equation in two dimensions,
\begin{equation}
\label{eq:waveeq2d}
\begin{aligned}
\rho \dd{\bar{v}}{t} + \nabla  \theta &= 0, \\
\frac{1}{K}\dd{\theta}{t} + \nabla \cdot \bar{v} &= g(t) \delta(\bar{x}-\bar{x}_0(t)),
\end{aligned}
\end{equation}
where $\bar{x} = [x, \ y]^\top$ is position, $\theta$ is pressure, and $\bar{v}$ is particle velocity. We consider a square domain with side $L=2.5$ and impose characteristic boundary conditions on all boundaries. We select the material parameters $K = 1$ and $\rho = 1$, which gives the wave speed $c = 1$. The source moves with constant speed $v_0=0.5$ along the circle given by
\begin{equation}
\bar{x}_0(t) =
\begin{bmatrix}
x_0(t) \\ y_0(t)
\end{bmatrix}
=
\begin{bmatrix} 
1.25 + 0.2\sin(5v_0t) \\ 
1.25 + 0.2\cos(5v_0t) 
\end{bmatrix},
\end{equation}
We use the Gaussian source time function \eqref{eq:Gauss1} with $t_0=1$ and $\sigma = \frac{1}{25}$. The problem is discretized in space with Summation-by-Parts finite-difference operators \cite{Svard2014,DelReyFernandez2014a} and the boundary conditions are imposed weakly, using Simultaneous Approximation Terms \cite{CarpenterGottlieb94}. The procedure is identical to the centered finite-difference discretization in \cite{Rydin2018}.

 We define the two-dimensional discrete $\delta$ distribution as the Cartesian product of two one-dimensional distributions corresponding to the coordinate directions,  
\begin{equation}
\bfd^{\bar{x}_0} = \bfd^{x_{0}} \kron \bfd^{y_{0}}.
\end{equation}
This tensor-product construction of multidimensional sources was presented for stationary sources by Tornberg and Engquist in \cite{Tornberg2004}. In both spatial directions, the sonic boom wavenumber $k_*$ has been defined by $v_{max} = v_0$. As in the previous experiment, $\bfd^{x_0}$ and $\bfd^{y_0}$ have been truncated by the window in \eqref{eq:window} with $w=1/2$, $q = 2p+2$ and $C_\ell = 4$.   

For time discretization, we again employ the fourth order Runge--Kutta method, with time step $\Delta t = 0.1h$. We study the error at time $t_{end} = 1$.  Rather than an analytical solution, we compare against a reference solution, shown in Figure \ref{fig:refsol2D}, computed with $N=3201$ grid points in each coordinate direction. The converge rate is computed as
 \begin{equation}
 c_{r} = \log \left( \frac{e_{N_2}}{e_{N_1}} \right)\big / \log \left( \frac{N_2-1}{N_1-1} \right)
 \end{equation}
where $e_N$ is the $l^2$ norm of the error vector obtained with $N$ points in each spatial direction. The obtained errors and convergence rates are displayed in Table $\ref{tab:conv2D}$. The convergence rates agree well with the theoretical and experimental results for one dimension. This indicates that the tensor product of motion-consistent discrete $\delta$ distributions yields design-order convergence for accelerating sources in multiple dimensions.

The accuracy of the finite difference scheme is reduced to order $p/2$ at boundaries. Therefore, to highlight the accuracy of the source discretization, we choose to end the simulation before the wavefield interacts with the boundaries. If the wavefield were allowed to interact with the boundaries, we would observe convergence of order $p/2+1$ \cite{Gustafsson75}. We stress, however, that there are no problems associated with the wavefield interacting with the boundaries that would not be present in a simulation without a point source. The only requirement is that $\bar{x}_0$ stays far enough from boundaries that the footprint of the windowed $\bfd^{\bar{x}_0}$ does not overlap with the boundary stencils of the Summation-by-Parts finite difference discretization. Since $\bfd^{\bar{x}_0}$ was designed for periodic problems, it is only allowed to act on grid points where the centered finite difference operator is applied. As long as the source trajectory does not touch the boundary, this condition is always satisfied for $h$ smaller than some $h_0$, since the source width decreases with grid refinement. In practice, if the source path is very close to the boundary, grid-refining until $h < h_0$ may of course be infeasible, however.

\begin{table}[H]
	\center
	\begin{tabular}{r  |c c| c c| c c}
		 & \multicolumn{2}{c|}{$2nd$ order} & \multicolumn{2}{c|}{$4th$ order} & \multicolumn{2}{c}{$6th$ order}\\
		\hline
		$N$ & $log_{10}(e_N)$ &$c_{r}$&  $log_{10}(e_N)$ &$c_{r}$&$log_{10}(e_N)$ &$c_{r}$\\
		101  &-0.70& -& -0.87 & -    & -0.95 & - \\
		201  &-0.75& 0.17& -1.24 & 1.26 & -1.58 & 2.11 \\
		401  &-0.94& 0.62& -2.00 & 2.82 & -3.06 & 4.94 \\
		801  &-1.41& 1.57& -3.25 & 3.88 & -4.87 & 6.04 \\
		1601 &-2.01& 1.98& -4.45 & 3.99 & -6.68 & 5.99 \\
	\end{tabular}
	\caption{$l^2$ errors and convergence rates for the 2D wave equation }
	\label{tab:conv2D}
\end{table}

\begin{figure}[htb]
		\includegraphics[width=\textwidth]{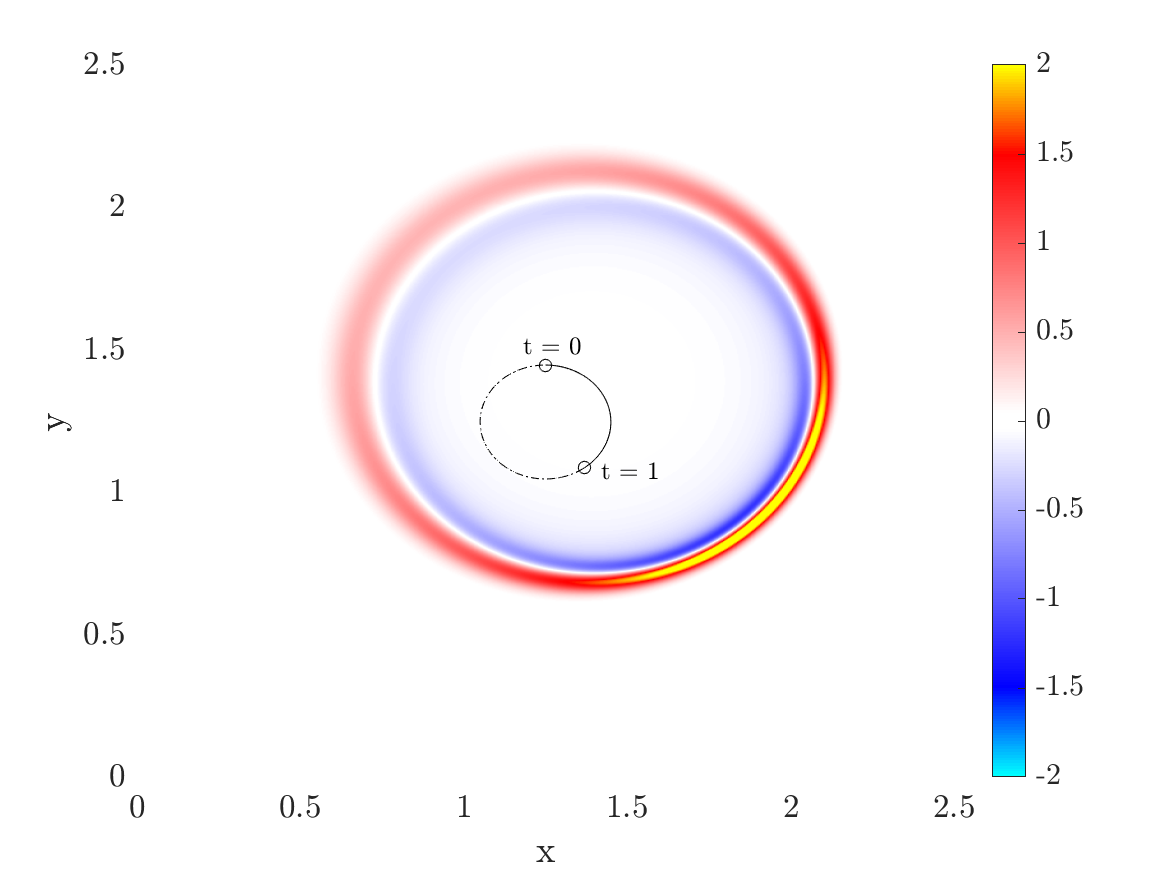}
	\caption[ ]
	{Pressure component of the reference solution and source trajectory for the 2D wave equation}
	\label{fig:refsol2D}
\end{figure}

\section{Conclusion} \label{sec:conclusion}
We have derived high-order discretizations of moving point sources in hyperbolic equations. Compared to a stationary source discretization, there are additional requirements on the moving source for convergence. First, the source must not excite modes that propagate with the same velocity as the source. Second, the Fourier spectrum amplitude of the discrete $\delta$ distribution needs to be independent of the source posititon. The convergence properties of the source discretization are verified by numerical experiments with the advection equation in one dimension and with an accelerating source in the acoustic wave equation in two dimensions. The approximation of the moving source covers on the order of $\sqrt{N}$ grid points on an $N$-point grid and is therefore not applicable if the source trajectory is very close to boundaries or interfaces; we hope to address this in future work.


\bibliographystyle{habbrv}
\bibliography{refs}

\end{document}